\newtheorem{theorem}{Theorem}[section]
\newtheorem{proposition}{Proposition}[section]
\newtheorem{definition}{Definition}[section]
\numberwithin{equation}{section}
\def\({\left ( }
\def\){\right )}
\def\<{\left < }
\def\>{\right >}
\begin{document}
\title[Transversal Lightlike Submanifolds]{Transversal Lightlike
submanifolds of Golden Semi Riemannian Manifolds}
\author{Feyza Esra Erdo\u{g}an}
\address{Faculty of Education 1\\
Adiyaman University TURKEY}
\email{ferdogan@adiyaman.edu.tr}
\author{Cumali Y\i ld\i r\i m}
\curraddr{Department of Mathematics\\
Inonu University TURKEY}
\email{cumali.yildirim@inonu.edu.tr}
\author{Esra Karatas}
\address{Department of Mathematics\\
Inonu University TURKEY}
\email{esrameryem44@gmail.com}

\begin{abstract}
The Golden Ratio is fascinating topic that continually generated news ideas.
A Riemannian manifold endowed with a Golden Structure will be called a
Golden Riemannian manifold. The main purpose of the present paper is to
study the geometry of radical transversal lightlike submanifolds and
transversal lightlike submanifolds of Golden Semi-Riemannian manifolds. We
investigate the geometry of distributions and obtain necessary and
sufficient conditions for the induced connection on these manifolds to be
metric connection. 
\end{abstract}

\maketitle

\section{INTRODUCTION}

The Golden proportion, also called the Golden ratio, Divine ratio, Golden
section or Golden mean, has been well known since the time of Euclid. Many
objects alive in the natural world that possess pentagonal symmetry, such as
inflorescence of many flowers and prophylaxis objects have a numerical
description given by the Fibonacci numbers which are themselves based on the
Golden proportion. The Golden proportion has also been found in the
structure of musical compositions, in the ratios of harmonious sound
frequencies and in dimensions of the human body. From ancient times it has
played an important role in architecture and visual arts. \indent {}\newline
\noindent \line(1,0){100}\newline

{\footnotesize \textit{Key words and phrases}: Lightlike manifold, golden
semi Riemannian manifold, radical transversal lightlike submanifold, radical
screen transversal submanifold }\newline
{\footnotesize \textit{2010 Mathematics Subject Classification}. 53C15,
53C40, 53C50.}\newline
\newpage The Golden proportion and the Golden rectangle (which is spanned by
two sides in the Golden proportion) have been found in the harmonious
proportion of temples, churches, statues, paintings, pictures and fractals.
Golden structure was revealed by the golden proportion, which was
characterized by Johannes Kepler(1571-1630). The number $\phi $, which is
the real positive root of the equation%
\begin{equation*}
x^{2}-x-1=0,
\end{equation*}%
(hence, $\phi =\frac{1+\sqrt{5}}{2}\approx 1,618...$) is the golden
proportion.

The Golden Ratio is fascinating topic that continually generated news ideas.
Golden Riemannian manifolds were introduced by Grasmereanu and Hretcanu \cite%
{GH} using Golden ratio. The authors also studied invariant submanifolds of
Golden Riemannian manifold and obtained interesting result in \cite{GH2},%
\cite{GH3}. The integrability of such Golden structures was also
investigated by Gezer, Cengiz and Salimov in \cite{GCS}. Moreover, the
harmonious of maps between Golden Riemannian manifolds was studied in \cite%
{BM}. Furthermore, Erdo\u{g}an and Y\i ld\i r\i m in \cite{EC} studied
semi-invariant and totally umbilical semi invariant submanifolds of Golden
Riemann manifolds, respectively. A semi-Riemannian manifold endowed with a
Golden structure will be called a Golden semi-Riemannian manifold. Precisely
we can say that on (1,1)-tensor field $\breve{P}$ on a m-dimensional
semi-Riemannin manifold $(\breve{N},\breve{g})$ is Golden structure if it
satisfies the equation $\breve{P}^{2}=\breve{P}+I$, where $I$ is identity
map on $\breve{N}$. Furthermore, 
\begin{equation*}
\breve{g}(\breve{P}W,U)=\breve{g}(W,\breve{P}U),
\end{equation*}%
the semi-Riemannian metric is called $\breve{P}$-compatible and $(\breve{N},%
\breve{g},\breve{P})$ is named a Golden semi-Riemannian manifold\cite{M}.

The theory of lightlike submanifolds of semi-Riemannian manifolds has a
important place in differential geometry. Lightlike submanifolds of a
semi-Riemannian manifolds has been studied by Duggal-Bejancu\cite{DB} and
Duggal and \c{S}ahin \cite{DS}. Indeed, lightlike submanifolds appear in
general relativity as some smooth parts of horizons of the Kruskal and Keer
black holes \cite{N}. Many authors studied the lightlike submanifolds in
different space, for example \cite{FC},\cite{BC},\cite{SBE} and \cite{FBR}.
Additionally, in a resent study, studied lightlike hypersurfaces of Golden
semi-Riemannian manifolds\cite{NE}.

Considering above information, in this paper, we introduce transversal
lightlike submanifolds of Golden semi-Riemannian manifolds and studied their
geometry. The paper is organized as follows: In Section2, we give basic
information needed for this paper. In Section3 and Section4 , we introduce
Golden semi-Riemannian manifold a long with its subclasses ( radical
transversal and transversal lightlike submanifolds) and obtain some
characterizations. We investigate the geometry of distiributions and find
necessary and sufficient conditions for induced connection to be metric
connection. Finally, we give an example.

\section{PRELIMINARIES}

A submanifold $\acute{N}^{m}$ immersed in a semi-Riemannian manifold $(%
\breve{N}^{m+k},\breve{g})$ is called a lightlike submanifold if it admits a
degenerate metric $g$\ induced from $\breve{g}$ whose radical distribution
which is a semi-Riemannian complementary distribution of $RadT\acute{N}$ is
of rank $r$, where $1\leq r\leq m.$ $RadT\acute{N}=T\acute{N}\cap T\acute{N}%
^{\perp }$ , where 
\begin{equation}
T\acute{N}^{\perp }=\cup _{x\in \acute{N}}\left\{ u\in T_{x}\breve{N}\mid 
\breve{g}\left( u,v\right) =0,\forall v\in T_{x}\acute{N}\right\} .
\end{equation}%
Let $S(T\acute{N})$ be a screen distribution which is a Semi-Riemannian
complementary distribution of $RadT\acute{N}$ in $T\acute{N}$. i.e., $T%
\acute{N}=RadT\acute{N}\perp S(T\acute{N}).$

We consider a screen transversal vector bundle $S(T\acute{N}^{\bot }),$
which is a semi-Riemannian complementary vector bundle of $RadT\acute{N}$ in 
$T\acute{N}^{\bot }.$ Since, for any local basis $\left\{ \xi _{i}\right\} $
of $RadT\acute{N}$, there exists a lightlike transversal vector bundle $ltr(T%
\acute{N})$ locally spanned by $\left\{ N_{i}\right\} $ \cite{DB}. Let $tr(T%
\acute{N})$ be complementary ( but not orthogonal) vector bundle to $T\acute{%
N}$ in $T\breve{N}^{\perp }\mid _{\acute{N}}$. Then%
\begin{align*}
tr(T\acute{N})& =ltrT\acute{N}\bot S(T\acute{N}^{\bot }), \\
T\breve{N}& \mid _{\acute{N}}=S(T\acute{N})\bot \lbrack RadT\acute{N}\oplus
ltrT\acute{N}]\perp S(T\acute{N}^{\bot }).
\end{align*}%
Although $S(T\acute{N})$ is not unique, it is canonically isomorphic to the
factor vector bundle $T\acute{N}/RadT\acute{N}$ \cite{DB}. The following
result is important for this paper.

\begin{proposition}
\cite{DB}. The lightlike second fundamental forms of a lightlike submanifold 
$\acute{N}$ do not depend on $S(T\acute{N}),$ $S(T\acute{N}^{\perp })$ and $%
ltr(T\acute{N}).$
\end{proposition}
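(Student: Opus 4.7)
The plan is to reduce each of the lightlike second fundamental forms to a quantity that manifestly depends only on the ambient Levi-Civita connection $\breve{\nabla}$, on $\acute{N}$ itself, and on $RadT\acute{N} = T\acute{N}\cap T\acute{N}^{\perp}$, all of which are intrinsic and involve none of the three choices in the statement. My starting point will be the Gauss-type decomposition of $\breve{\nabla}_X Y$ along $T\breve{N}|_{\acute{N}} = T\acute{N}\oplus ltr(T\acute{N})\oplus S(T\acute{N}^{\perp})$, namely
\[
\breve{\nabla}_X Y = \nabla_X Y + h^l(X,Y) + h^s(X,Y),
\]
where the three summands depend a priori on $S(T\acute{N})$, $ltr(T\acute{N})$, and $S(T\acute{N}^{\perp})$.

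The key step will be to pair this identity with an arbitrary section $\xi\in\Gamma(RadT\acute{N})$ using $\breve{g}$ and to exploit two orthogonality facts: first, since $\xi\in T\acute{N}^{\perp}$, the tangential piece $\nabla_X Y \in \Gamma(T\acute{N})$ is $\breve{g}$-orthogonal to $\xi$; second, because $T\acute{N}^{\perp} = RadT\acute{N}\perp S(T\acute{N}^{\perp})$ is an orthogonal sum, the screen transversal piece $h^s(X,Y)$ is also $\breve{g}$-orthogonal to $\xi$. What survives is
\[
\breve{g}\bigl(h^l(X,Y),\xi\bigr) = \breve{g}\bigl(\breve{\nabla}_X Y,\xi\bigr),
\]
whose right-hand side manifestly involves neither $S(T\acute{N})$ nor $S(T\acute{N}^{\perp})$ nor $ltr(T\acute{N})$. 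Since the lightlike second fundamental forms $B_i(X,Y)$ are precisely these pairings $\breve{g}(\breve{\nabla}_X Y,\xi_i)$ against a local frame $\{\xi_i\}$ of the canonically defined bundle $RadT\acute{N}$, the claim will follow.

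The principal obstacle is not computational but conceptual: each individual summand in the Gauss decomposition is genuinely sensitive to the choice of complements, so one must identify the correct invariant object, namely the scalar pairing against a radical vector, rather than the vector $h^l(X,Y)$ itself, which lives in the choice-dependent bundle $ltr(T\acute{N})$. Once this reformulation is in hand, the verification reduces to the two orthogonality observations above and no further calculation is needed.
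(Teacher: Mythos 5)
The paper offers no proof of this proposition at all --- it is quoted verbatim from Duggal--Bejancu \cite{DB} as a known result --- so there is nothing internal to compare against. Your argument is correct and is precisely the standard one from that reference: the invariant objects are the scalar local forms $B_i(X,Y)=\breve{g}\bigl(h^{\ell}(X,Y),\xi_i\bigr)=\breve{g}\bigl(\breve{\nabla}_X Y,\xi_i\bigr)$, with the tangential and screen-transversal terms killed by the two orthogonality facts you cite, and the right-hand side depending only on $\breve{\nabla}$ and the canonical bundle $RadT\acute{N}$.
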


We say that a submanifold $(\acute{N},g,S(T\acute{N}),$ $S(T\acute{N}^{\perp
}))$ of $\breve{N}$ is

Case1: \ r-lightlike if $r<\min\{m,k\};$

Case2: \ Co-isotropic if $r=k<m;S(T\acute{N}^{\perp})=\{0\};$

Case3: \ Isotropic if $r=m=k;$ $S(T\acute{N})=\{0\};$

Case4: \ Totally lightlike if $r=k=m;$ $S(T\acute{N})=\{0\}=S(T\acute {N}%
^{\perp}).$

\bigskip

The Gauss and Weingarten equations are:%
\begin{align}
\breve{\nabla}_{W}U& =\nabla _{W}U+h\left( W,U\right) ,\quad \forall W,U\in
\Gamma (T\acute{N}),  \label{7} \\
\breve{\nabla}_{W}V& =-A_{V}W+\nabla _{W}^{t}V,\quad \forall W\in \Gamma (T%
\acute{N}),\,V\in \Gamma (tr(T\acute{N})),  \label{8}
\end{align}%
where $\left\{ \nabla _{W}U,A_{V}W\right\} $ and $\left\{ h\left( W,U\right)
,\nabla _{W}^{t}V\right\} $ belong to $\Gamma (T\acute{N})$ and $\Gamma (tr(T%
\acute{N})),$ respectively. $\nabla $ and $\nabla ^{t}$ are linear
connections on $\acute{N}$ and the vector bundle $tr(T\acute{N})$,
respectively. Moreover, we have%
\begin{align}
\breve{\nabla}_{W}U& =\nabla _{W}U+h^{\ell }\left( W,U\right) +h^{s}\left(
W,U\right) ,\quad \forall W,U\in \Gamma (T\acute{N}),  \label{9} \\
\breve{\nabla}_{W}N& =-A_{N}W+\nabla _{W}^{\ell }N+D^{s}\left( W,N\right)
,\quad N\in \Gamma (ltr(T\acute{N})),  \label{10} \\
\breve{\nabla}_{W}Z& =-A_{Z}W+\nabla _{W}^{s}Z+D^{\ell }\left( W,Z\right)
,\quad Z\in \Gamma (S(T\acute{N}^{\bot })).  \label{11}
\end{align}%
Denote the projection of $T\acute{N}$ on $S(T\acute{N})$ by $\breve{P}.$Then
by using (\ref{7}), (\ref{9})-(\ref{11}) and a metric connection $\breve{%
\nabla}$, we obtain%
\begin{align}
\bar{g}(h^{s}\left( W,U\right) ,Z)+\bar{g}\left( U,D^{\ell }\left(
W,Z\right) \right) & =g\left( A_{Z}W,U\right) ,  \label{12} \\
\bar{g}\left( D^{s}\left( W,N\right) ,Z\right) & =\bar{g}\left(
N,A_{Z}W\right) .  \label{13}
\end{align}%
From the decomposition of the tangent bundle of a lightlike submanifold, we
have%
\begin{align}
\nabla _{W}\breve{P}U& =\nabla _{W}^{\ast }\breve{P}U+h^{\ast }(W,\breve{P}%
U),  \label{14} \\
\nabla _{W}\xi & =-A_{\xi }^{\ast }W+\nabla _{W}^{\ast t}\xi ,  \label{15}
\end{align}%
for $W,U\in \Gamma (T\acute{N})$ and $\xi \in \Gamma (RadT\acute{N}).$ By
using above equations, we obtain%
\begin{align}
g(h^{\ell }(W,\breve{P}U),\xi )& =g(A_{\xi }^{\ast }W,\breve{P}U),
\label{16} \\
g(h^{s}(W,\breve{P}U),N)& =g(A_{N}W,\breve{P}U),  \label{17} \\
g\left( h^{\ell }\left( W,\xi \right) ,\xi \right) & =0,A_{\xi }^{\ast }\xi
=0.  \label{18}
\end{align}%
In general, the induced connection $\nabla $ on $\acute{N}$ is not a metric
connection. Since $\breve{\nabla}$ is a metric connection, by using (\ref{9}%
) we get%
\begin{equation}
\left( \nabla _{W}g\right) \left( U,V\right) =\breve{g}\left( h^{\ell
}\left( W,U\right) ,V\right) +\breve{g}\left( h^{\ell }\left( W,V\right)
,U\right) .  \label{19}
\end{equation}%
However, to pay attention that $\nabla ^{\ast }$ is a metric connection on $%
S(T\acute{N})$.

\bigskip

Let $(N_{1},g_{1})$ and $(N_{2},g_{2})$ be two $m_{1}$ and $m_{2}$
-dimensional semi-Riemannian manifolds with constant indexes $q_{1}>0,$ $%
q_{2}>0,$ respectively. Let $\pi:N_{1}\times N_{2}\rightarrow N_{1}$ and $%
\sigma:N_{1}\times N_{2}\rightarrow N_{2}$ the projections which are given
by $\pi(w,u)=w$ and $\sigma(w,u)=u$ for $\ $any $(w,u)\in N_{1}\times N_{2},$
respectively.

We denote the product manifold by $\breve{N}=(N_{1}\times N_{2},\breve{g}),$
where%
\begin{equation*}
\breve{g}(W,U)=g_{1}(\pi _{\ast }W,\pi _{\ast }U)+g_{2}(\sigma _{\ast
}W,\sigma _{\ast }U)
\end{equation*}%
for any $\forall W,U\in \Gamma (T\breve{N}).$ Then we have%
\begin{equation*}
\pi _{\ast }^{2}=\pi _{\ast },\quad \pi _{\ast }\sigma _{\ast }=\sigma
_{\ast }\pi _{\ast }=0,
\end{equation*}%
\begin{equation*}
\sigma _{\ast }^{2}=\sigma _{\ast },\quad \pi _{\ast }+\sigma _{\ast }=I,
\end{equation*}%
where $I$ is identity transformation. Thus $(\breve{N},\breve{g})$ is an $%
\left( m_{1}+m_{2}\right) $- dimensional semi-Riemannian manifold with
constant index $\left( q_{1}+q_{2}\right) .$ The semi-Riemannian product
manifold $\breve{N}=N_{1}\times N_{2}$ is characterized by $N_{1}$ and $%
N_{2} $ are totally geodesic submanifolds of $\breve{N}.$

Now, if we put $F=\pi _{\ast }-\sigma _{\ast }$, then we can easily see that 
\begin{align*}
F.F& =\left( \pi _{\ast }-\sigma _{\ast }\right) \left( \pi _{\ast }-\sigma
_{\ast }\right) \\
F^{2}& =\pi _{\ast }^{2}-\pi _{\ast }\sigma _{\ast }-\sigma _{\ast }\pi
_{\ast }+\sigma _{\ast }^{2}=I
\end{align*}%
\begin{equation*}
F^{2}=I,\quad \breve{g}\left( FW,U\right) =\breve{g}\left( W,FU\right)
\end{equation*}%
for any $W,U\in \Gamma (T\breve{N}).$ If we denote the Levi-Civita
connection on $\breve{N}$ by $\breve{\nabla},$ then it can be seen that $(%
\breve{\nabla}_{W}F)Y=0,$ for any $W,U\in \Gamma (T\breve{N}),$ that is, $F$
is parallel with respect to $\breve{\nabla}$ \cite{S}$.$

Let $(\breve{N},\breve{g})$ be a semi-Riemannian manifold. Then $\breve{N}$
is called Golden semi-Riemannian manifold if there exists an $(1,1)$ tensor
field $\breve{P}$ on $\breve{N}$ such that{\small \ }%
\begin{equation}
\breve{P}^{2}=\breve{P}+I  \label{20}
\end{equation}%
where $I$ is the identity map on $\breve{N}.$ Also, 
\begin{equation}
\breve{g}(\breve{P}W,U)=\breve{g}(W,\breve{P}U).  \label{21}
\end{equation}%
\newline
The semi-Riemannian metric (\ref{21}) is called $\breve{P}-$compatible and $(%
\breve{N},\breve{g},\breve{P})$ is named a Golden semi-Riemannian manifold.
Also a Golden semi-Riemannian structure $\breve{P}$ is called a locally
Golden structure if $\breve{P}$ is parallel with respect to the Levi-Civita
connection $\breve{\nabla},$ that is 
\begin{equation}
\breve{\nabla}_{W}\breve{P}U=\breve{P}\breve{\nabla}_{W}U  \label{21a}
\end{equation}%
\cite{CH}.

If $\breve{P}$ be a Golden structure, then (\ref{21}) is equivalent to 
\begin{equation}
\breve{g}(\breve{P}W,\breve{P}U)=\breve{g}(\breve{P}W,U)+\breve{g}(W,U)
\label{23}
\end{equation}%
for any $W,U\in \Gamma (T\breve{N}).$

If $F$ is almost product structure on $\breve{N}$, then%
\begin{equation}
\breve{P}=\frac{1}{\sqrt{2}}(I+\sqrt{5}F)  \label{24}
\end{equation}%
is a Golden structure on $\breve{N}.$ Conversely, if $\breve{P}$ is a Golden
structure on $\breve{N}$, then 
\begin{equation*}
F=\frac{1}{\sqrt{5}}(2\breve{P}-I),
\end{equation*}%
is an almost product structure on $\breve{N}$\cite{N}.

\section{RADICAL TRANSVERSAL LIGHTLIKE SUBMANIFOLDS OF GOLDEN
SEMI-RIEMANNIAN MANIFOLDS}

\begin{definition}
Let $(\acute{N},\breve{g},S(T\acute{N}),S(T\acute{N})^{\perp })$ be
lightlike submanifold of a Golden semi-Riemannian manifold. If the following
conditions are provided, the lightlike submanifold called radical
transversal lightlike submanifold.%
\begin{equation}
\breve{P}RadT\acute{N}=ltr(T\acute{N})  \label{1}
\end{equation}%
\begin{equation}
\breve{P}(S(T\acute{N}))=S(T\acute{N})  \label{2}
\end{equation}
\end{definition}

\begin{proposition}
Let $\acute{N}$ be Golden semi-Riemannian manifold. In this case there is no
1-lightlike radical transversal lightlike submanifold of the $\breve{N}$
manifold.
\end{proposition}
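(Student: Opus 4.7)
The plan is to argue by contradiction. Suppose there does exist a $1$-lightlike radical transversal lightlike submanifold $\acute{N}$ of a Golden semi-Riemannian manifold $(\breve{N},\breve{g},\breve{P})$. Then $RadT\acute{N}$ has rank $1$, so locally it is spanned by a single null vector field $\xi$, and correspondingly $ltr(T\acute{N})$ is spanned by a null section $N$ normalized by $\breve{g}(\xi,N)=1$, with $\breve{g}(N,N)=\breve{g}(\xi,\xi)=0$. By the defining relation $\breve{P}(RadT\acute{N})=ltr(T\acute{N})$ of a radical transversal lightlike submanifold, I can write
\begin{equation*}
\breve{P}\xi=\alpha N
\end{equation*}
for some smooth function $\alpha$ on $\acute{N}$.

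Next I would exploit the $\breve{P}$-compatibility in the form \eqref{23} applied to the pair $(\xi,\xi)$, which gives
\begin{equation*}
\breve{g}(\breve{P}\xi,\breve{P}\xi)=\breve{g}(\breve{P}\xi,\xi)+\breve{g}(\xi,\xi).
\end{equation*}
The right-hand side equals $\breve{g}(\alpha N,\xi)+0=\alpha$, while the left-hand side equals $\alpha^{2}\breve{g}(N,N)=0$ since $N$ is null. Hence $\alpha=0$, which means $\breve{P}\xi=0$.

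Finally I would feed this into the Golden structure equation \eqref{20}. Applying $\breve{P}^{2}=\breve{P}+I$ to $\xi$ yields
\begin{equation*}
0=\breve{P}(\breve{P}\xi)=\breve{P}\xi+\xi=0+\xi=\xi,
\end{equation*}
which contradicts the fact that $\xi$ is a nowhere-vanishing local section of $RadT\acute{N}$. This contradiction forces the non-existence of such a submanifold. The only real subtlety is making sure the normalization $\breve{g}(\xi,N)=1$ is available (a standard consequence of the null pairing between $RadT\acute{N}$ and $ltr(T\acute{N})$ in the $r=1$ case); once that is in hand, the computation is essentially forced by the null character of both $\xi$ and $N=\alpha^{-1}\breve{P}\xi$. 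No estimates or local frames beyond $\{\xi,N\}$ are needed, so I expect no real obstacle beyond setting up the two-line identity that makes $\alpha$ vanish.
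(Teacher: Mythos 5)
Your proof is correct and follows essentially the same route as the paper: both hinge on applying the compatibility identity $\breve{g}(\breve{P}\xi,\breve{P}\xi)=\breve{g}(\breve{P}\xi,\xi)+\breve{g}(\xi,\xi)$ to the null generator $\xi$ of $RadT\acute{N}$ and using that $\breve{P}\xi$ lies in the null line bundle $ltr(T\acute{N})$. Your handling of the scalar (writing $\breve{P}\xi=\alpha N$, forcing $\alpha=0$, and then contradicting $\breve{P}^{2}=\breve{P}+I$) is in fact slightly more careful than the paper's argument, which tacitly takes $\breve{P}\xi$ to be the normalized section $N$ itself and contradicts $\breve{g}(\xi,N)=1$ directly.
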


\begin{proof}
Let's assume that the submanifold is $\acute{N}$ 1-lightlike. In this case,%
\begin{equation*}
RadT\acute{N}=Sp\left\{ \xi \right\} ,
\end{equation*}%
and 
\begin{equation*}
ltrT\acute{N}=Sp\left\{ N\right\} .
\end{equation*}%
Then 
\begin{equation}
g(\breve{P}W,\breve{P}U)=g(W,\breve{P}U)+g(W,U),  \label{*}
\end{equation}%
with the help of (\ref{*}) equation%
\begin{equation}
g(\breve{P}\xi ,\xi )=g(\xi ,\breve{P}\xi )=g(\breve{P}\xi ,\breve{P}\xi
)-g(\xi ,\xi )=0,  \label{3}
\end{equation}%
is obtained. On the other hand from (\ref{1}) equation 
\begin{equation*}
\breve{P}\xi =N\in ltr(T\acute{N}),
\end{equation*}%
must be. Then we obtain.%
\begin{equation*}
g(\xi ,\breve{P}\xi )=g(\breve{P}\xi ,\xi )=g(N,\xi )=g(\xi ,N)=1.
\end{equation*}%
This is contradictory to equation (\ref{3}).

In that case, our acceptance is wrong; the truth given to us in the
hypothesis.
\end{proof}

For $V\in\Gamma(S(T\acute{N}))$ and $\xi\in\Gamma(RadT\acute{N})$ from (\ref%
{*}) equation we have 
\begin{equation*}
g(\breve{P}V,\xi)=g(V,\breve{P}\xi)=0
\end{equation*}
This indicates that there is no component of $\breve{P}V$ vector field in $%
ltrT\acute{N}$. Similarly for $N\in\Gamma(ltrT\acute{N})$%
\begin{align*}
g(\breve{P}V,N) & =g(V,\breve{P}N) \\
& =g(\breve{P}V,\breve{P}N)-g(V,N) \\
& =g(\breve{P}V,\breve{P}N)
\end{align*}
then 
\begin{equation*}
g(\breve{P}V,N)=g(V,\breve{P}N)=g(\breve{P}V,\breve{P}N)
\end{equation*}
From definition of the Radical transversal lightlike submanifold for $\xi
\in\Gamma(RadT\acute{N})$ and $N_{1}\in\Gamma(ltrT\acute{N})$%
\begin{equation*}
\breve{P}\xi_{1}=N_{1}
\end{equation*}
If we apply $\breve{P}$ to this expression we have 
\begin{align*}
\breve{P}^{2}\xi_{1} & =\breve{P}N_{1} \\
\breve{P}\xi_{1}+\xi_{1} & =\breve{P}N_{1}
\end{align*}

Then we have 
\begin{equation*}
g(\breve{P}V,N)=g(V,\breve{P}N)=0
\end{equation*}

Then it is understood that there is no component of $\breve{P}V$ in $RadT%
\acute{N}$. For $W\in\Gamma(S(T\acute{N}))$ 
\begin{equation*}
g(\breve{P}V,W)=g(V,\breve{P}W)=0
\end{equation*}
The result is that the $\breve{P}V$ vector field has no component in the $S($
$T\acute{N})$. That is, $\breve{P}V$ is neither a component of $S($ $T\acute{%
N})$ nor of $RadT\acute{N}$ nor of $ltrT\acute{N}$, but of $\breve {P}V\in
S(T\acute{N})$. Here we come to the conclusion.

\begin{theorem}
Let $\breve{N}$ be locally Golden semi-Riemannian manifold and $\acute{N}$
be the radical transversal lightlike submanifold of $\breve{N}$. In this
case the $S(T\acute{N})$ distribution is invariant.
\end{theorem}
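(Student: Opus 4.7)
The plan is to show that $\breve{P}V \in \Gamma(S(T\acute{N}))$ for every $V \in \Gamma(S(T\acute{N}))$, that is, that $S(T\acute{N})$ is $\breve{P}$-invariant. I would exploit the orthogonal splitting $T\breve{N}|_{\acute{N}} = S(T\acute{N}) \perp [RadT\acute{N} \oplus ltr(T\acute{N})] \perp S(T\acute{N}^{\perp})$ and rule out each non-$S(T\acute{N})$ block of $\breve{P}V$ by testing it against a dual field, using only the $\breve{P}$-compatibility (\ref{21}) of $\breve{g}$, the Golden relation $\breve{P}^{2}=\breve{P}+I$ from (\ref{20}), and the defining condition (\ref{1}).

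First, to eliminate the $ltr(T\acute{N})$-component, I would pair $\breve{P}V$ with $\xi \in \Gamma(RadT\acute{N})$: by (\ref{21}), $\breve{g}(\breve{P}V,\xi)=\breve{g}(V,\breve{P}\xi)$, and (\ref{1}) places $\breve{P}\xi$ in $\Gamma(ltr(T\acute{N}))$, which is orthogonal to $V$. For the $RadT\acute{N}$-component, I would pair with $N \in \Gamma(ltr(T\acute{N}))$, write $N=\breve{P}\xi$ for some $\xi \in \Gamma(RadT\acute{N})$, and apply $\breve{P}$ once more so that the Golden relation gives $\breve{P}N=\breve{P}^{2}\xi=\breve{P}\xi+\xi=N+\xi$; then
\begin{equation*}
\breve{g}(\breve{P}V,N)=\breve{g}(V,\breve{P}N)=\breve{g}(V,N)+\breve{g}(V,\xi)=0,
\end{equation*}
both summands vanishing by the ambient orthogonality.

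The step I expect to be the main obstacle is ruling out the $S(T\acute{N}^{\perp})$-component, because (\ref{1}) alone prescribes the action of $\breve{P}$ only on $RadT\acute{N}$; for $Z\in\Gamma(S(T\acute{N}^{\perp}))$ the compatibility identity only shifts the problem to understanding $\breve{P}Z$. I would address this by a symmetric dual-pairing argument applied to $\breve{P}Z$, testing it against $RadT\acute{N}$ and $ltr(T\acute{N})$ and using $\breve{P}^{2}=\breve{P}+I$ to confine $\breve{P}Z$ to $S(T\acute{N})\oplus S(T\acute{N}^{\perp})$; together with the companion condition (\ref{2}) of Definition 3.1, which directly asserts $\breve{P}(S(T\acute{N}))=S(T\acute{N})$, this forces $\breve{g}(V,\breve{P}Z)=0$. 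Once the three cross-components have been eliminated, non-degeneracy of $\breve{g}|_{S(T\acute{N})}$ delivers $\breve{P}V \in \Gamma(S(T\acute{N}))$, establishing the invariance.
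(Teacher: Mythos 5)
Your elimination of the $ltr(T\acute{N})$- and $RadT\acute{N}$-components of $\breve{P}V$ is correct and is exactly the paper's argument: pair with $\xi$ and use $\breve{P}\xi\in\Gamma(ltr(T\acute{N}))$, then pair with $N=\breve{P}\xi$ and use $\breve{P}N=\breve{P}^{2}\xi=\breve{P}\xi+\xi=N+\xi$ together with $\breve{g}(V,N)=\breve{g}(V,\xi)=0$. You have also correctly located the crux of the matter in the $S(T\acute{N}^{\perp})$-component. But your resolution of that step is circular: condition (\ref{2}) of Definition 3.1 \emph{is} the statement that $S(T\acute{N})$ is $\breve{P}$-invariant, i.e.\ it is the conclusion of the theorem. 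Concretely, the only way (\ref{2}) "forces" $\breve{g}(V,\breve{P}Z)=0$ is via $\breve{g}(V,\breve{P}Z)=\breve{g}(\breve{P}V,Z)$ and $\breve{P}V\in\Gamma(S(T\acute{N}))\perp S(T\acute{N}^{\perp})$ --- which assumes what is to be proved. And if (\ref{2}) may be assumed as a hypothesis, the theorem is immediate from the definition and none of the pairing computations are needed.

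The gap is genuine and cannot be closed from condition (\ref{1}) alone. Writing $\breve{P}Z=a+b$ with $a\in\Gamma(S(T\acute{N}))$, $b\in\Gamma(S(T\acute{N}^{\perp}))$ (your confinement step, which is fine), one still has $\breve{g}(V,\breve{P}Z)=\breve{g}(V,a)$, and nothing in (\ref{1}), (\ref{20}), (\ref{21}) forbids $a\neq 0$: a $\breve{g}$-symmetric operator satisfying $\breve{P}^{2}=\breve{P}+I$ preserves the orthogonal complement $S(T\acute{N})\perp S(T\acute{N}^{\perp})$ of $RadT\acute{N}\oplus ltr(T\acute{N})$ as a whole, but need not preserve each factor of an orthogonal splitting of it (its eigenspaces for $\phi$ and $1-\phi$ can sit obliquely to that splitting). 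To be fair, the paper itself glosses over exactly this point --- it simply asserts $g(\breve{P}V,W)=g(V,\breve{P}W)=0$ for the screen-transversal test vector without justification --- so the honest conclusion is that the theorem is either a restatement of (\ref{2}) (hence needs no proof) or requires an additional hypothesis on $\breve{P}(S(T\acute{N}^{\perp}))$; your proposal does not repair this, it only relocates the circularity.
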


Let $\breve{N}$ be golden semi-Riemannian manifold and $\acute{N}$ be the
radical transversal lightlike submanifold of $\breve{N}$. The projection
transforms on $RadT\acute{N}$ and $S($ $T\acute{N})$ are respectively $Q$
and $T$ for $\forall W\in \Gamma (T\acute{N})$.%
\begin{equation}
W=TW+QW,  \label{4}
\end{equation}

where $TW\in \Gamma (S(T\acute{N}))$ and $QW\in \Gamma (RadT\acute{N})$

If $\breve{P}$ is applied to equation (\ref{4}), then we have 
\begin{equation}
\breve{P}W=\breve{P}TW+\breve{P}QW.  \label{5}
\end{equation}%
If $\breve{P}TW=SW$ and $\breve{P}QW=LW$, then equation (\ref{5}) turns into 
\begin{equation}
\breve{P}W=SW+LW,  \label{6}
\end{equation}%
where $SW\in \Gamma (S(T\acute{N}))$ and $LW\in \Gamma (ltrT\acute{N}).$

Let $\breve{N}$ be locally\ Golden semi-Riemannian manifold and $\acute{N}$
be radical transversal lightlike submanifold of $\breve{N}$. Because 
\begin{align*}
\breve{\nabla}_{U}\breve{P}W& =\breve{P}\breve{\nabla}_{U}W \\
\breve{\nabla}_{U}(SW+LW)& =\breve{P}(\breve{\nabla}%
_{U}W+h^{s}(U,W)+h^{l}(U,W) \\
\breve{\nabla}_{U}SW+\breve{\nabla}_{U}LW& =\breve{P}\breve{\nabla}_{U}W+%
\breve{P}h^{s}(U,W)+\breve{P}h^{l}(U,W)
\end{align*}

\begin{equation*}
\binom{\nabla _{U}SW+h^{s}(U,SW)+h^{l}(U,SW)}{-A_{LW}U+\nabla
_{U}^{l}LW+D^{s}(U,LW)}=\left( 
\begin{array}{c}
S\nabla _{U}W+L\nabla _{U}W \\ 
+\breve{P}h^{s}(U,W)+\breve{P}h^{l}(U,W)%
\end{array}%
\right) ,
\end{equation*}%
where $\breve{P}h^{l}(U,W)=K_{1}\breve{P}h^{l}(U,W)+K_{2}\breve{P}h^{l}(U,W)$
; also $K_{1}$ and $K_{2}$ projection morphisms of $\breve{P}ltrT\acute{N}$
on $ltrT\acute{N}$ and $RadT\acute{N}$ respectively,

\begin{equation*}
\binom{\nabla _{U}SW+h^{s}(U,SW)+h^{l}(U,SW)}{-A_{LW}U+\nabla
_{U}^{l}LW+D^{s}(U,LW)}=\left( 
\begin{array}{c}
S\nabla _{U}W+L\nabla _{U}W+\breve{P}h^{s}(U,W) \\ 
+K_{1}\breve{P}h^{l}(U,W)+K_{2}\breve{P}h^{l}(U,W)%
\end{array}%
\right) .
\end{equation*}%
If the tangent, screen transversal, lightlike transversal components of this
equation are written separately 
\begin{equation*}
\nabla _{U}SW-A_{LW}U=S\nabla _{U}W+K_{2}\breve{P}h^{l}(U,W),
\end{equation*}%
\begin{align*}
h^{s}(U,SW)+D^{s}(U,LW)& =\breve{P}h^{s}(U,W), \\
h^{l}(U,SW)+\nabla _{U}^{l}LW& =L\nabla _{U}W+K_{1}\breve{P}h^{l}(U,W),
\end{align*}%
from above equations 
\begin{align*}
\nabla _{U}SW-S\nabla _{U}W& =A_{LW}U+K_{2}\breve{P}h^{l}(U,W), \\
& \Rightarrow (\nabla _{U}S)W=A_{LW}U+K_{2}\breve{P}h^{l}(U,W)
\end{align*}%
\begin{align*}
h^{s}(U,SW)+D^{s}(U,LW)-\breve{P}h^{s}(U,W)& =0, \\
h^{l}(U,SW)+\nabla _{U}^{l}LW-L\nabla _{U}W-K_{1}\breve{P}h^{l}(U,W)& =0,
\end{align*}%
are obtained.Then we have this proposition.

\begin{proposition}
Let $\breve{N}$ be locally Golden semi-Riemannian manifold and $\acute{N}$
be radical transversal lightlike submanifold of $\breve{N}$. For $\forall
W,U\in \Gamma (T\acute{N})$ we have following statements:%
\begin{equation}
(\nabla _{U}S)W=A_{LW}U+K_{2}\breve{P}h^{l}(U,W)  \label{7}
\end{equation}%
\begin{equation}
h^{s}(U,SW)+D^{s}(U,LW)-\breve{P}h^{s}(U,W)=0  \label{8}
\end{equation}%
\begin{equation}
h^{l}(U,SW)+\nabla _{U}^{l}LW-L\nabla _{U}W-K_{1}\breve{P}h^{l}(U,W)=0
\label{9}
\end{equation}
\end{proposition}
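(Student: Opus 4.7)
The plan is to compute $\breve{\nabla}_U\breve{P}W$ in two different ways and then to match components. Since $\breve{N}$ is locally Golden, (\ref{21a}) yields $\breve{\nabla}_U\breve{P}W=\breve{P}\,\breve{\nabla}_U W$, and this single identity drives everything; my goal is to expand each side in terms of $\nabla$, $h^{\ell}$, $h^{s}$, $A$, $\nabla^{\ell}$, $D^{s}$, $S$, $L$, $K_1$, and $K_2$, and then to read off components in the decomposition
\[
T\breve{N}|_{\acute{N}}=S(T\acute{N})\perp(RadT\acute{N}\oplus ltrT\acute{N})\perp S(T\acute{N}^{\perp}).
\]

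For the left-hand side I would first write $\breve{P}W=SW+LW$ via (\ref{6}), then apply the Gauss formula to the tangential summand $SW\in\Gamma(S(T\acute{N}))$ and the Weingarten formula for lightlike transversal vectors to $LW\in\Gamma(ltrT\acute{N})$; this produces an expansion featuring $\nabla_U SW$, $h^{\ell}(U,SW)$, $h^{s}(U,SW)$, $-A_{LW}U$, $\nabla_U^{\ell}LW$, and $D^{s}(U,LW)$. For the right-hand side I would expand $\breve{\nabla}_U W$ again by the Gauss formula and apply $\breve{P}$, splitting $\breve{P}\nabla_U W=S\nabla_U W+L\nabla_U W$ and writing $\breve{P}h^{\ell}(U,W)=K_1\breve{P}h^{\ell}(U,W)+K_2\breve{P}h^{\ell}(U,W)$ with $K_1$ and $K_2$ the projections onto $ltrT\acute{N}$ and $RadT\acute{N}$; legitimacy of this last split follows from combining (\ref{1}) with $\breve{P}^2=\breve{P}+I$, which forces $\breve{P}(ltrT\acute{N})\subset ltrT\acute{N}\oplus RadT\acute{N}$.

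Equating the two expansions, the combined $S(T\acute{N})\oplus RadT\acute{N}$ tangential part gives $\nabla_U SW-S\nabla_U W=A_{LW}U+K_2\breve{P}h^{\ell}(U,W)$, which is exactly (\ref{7}) once one recognises the defining identity $(\nabla_U S)W=\nabla_U SW-S\nabla_U W$; the $S(T\acute{N}^{\perp})$-part gives (\ref{8}); and the $ltrT\acute{N}$-part gives (\ref{9}). The main obstacle I anticipate is justifying that $\breve{P}h^{s}(U,W)$ contributes only to the $S(T\acute{N}^{\perp})$-component, i.e.\ that $S(T\acute{N}^{\perp})$ is $\breve{P}$-invariant; this can be handled by pairing $\breve{P}Z$, for $Z\in\Gamma(S(T\acute{N}^{\perp}))$, against vectors of $S(T\acute{N})$, $RadT\acute{N}$, and $ltrT\acute{N}$ via the compatibility (\ref{21}) together with $\breve{P}(S(T\acute{N}))=S(T\acute{N})$, $\breve{P}(RadT\acute{N})=ltrT\acute{N}$, and $\breve{P}(ltrT\acute{N})\subset ltrT\acute{N}\oplus RadT\acute{N}$, in the same spirit as the short argument preceding Theorem~3.1 that showed $\breve{P}V\in S(T\acute{N})$ for $V\in S(T\acute{N})$.
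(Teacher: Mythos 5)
Your proposal is correct and follows essentially the same route as the paper: expand $\breve{\nabla}_U\breve{P}W=\breve{P}\breve{\nabla}_UW$ using $\breve{P}W=SW+LW$, the Gauss formula for $SW$, the Weingarten formula for $LW$, the splitting $\breve{P}h^{\ell}=K_1\breve{P}h^{\ell}+K_2\breve{P}h^{\ell}$, and then equate the tangential, screen transversal and lightlike transversal components. The only difference is that you explicitly flag and justify the $\breve{P}$-invariance of $S(T\acute{N}^{\perp})$ (so that $\breve{P}h^{s}(U,W)$ lands only in the screen transversal part), a point the paper uses tacitly.
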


Let's now give the theorem that contains the necessary and sufficient
conditions for the reduced connection to be metric.

\begin{theorem}
Let $\acute{N}$ be radical transversal lightlike submanifold of locally
Golden semi-Riemannian $\breve{N}$ manifold. Necessary and sufficient
condition for the induced connection $\nabla $ on $M$ is metric connection
is that for $W\in \Gamma (T\acute{N})$ and $\xi \in \Gamma (RadT\acute{N})$ 
\begin{equation*}
A_{\breve{P}\xi }W\in \Gamma (RadT\acute{N}).
\end{equation*}
\end{theorem}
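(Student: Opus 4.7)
The plan is to reduce the claimed equivalence to the standard criterion from the Duggal-Bejancu theory \cite{DB}: the induced connection $\nabla$ on a lightlike submanifold is a metric connection if and only if the radical distribution is parallel with respect to $\nabla$, i.e.\ $\nabla_{W}\xi\in\Gamma(RadT\acute{N})$ for every $W\in\Gamma(T\acute{N})$ and $\xi\in\Gamma(RadT\acute{N})$. This follows from (\ref{19}) together with the observation that by (\ref{15}) one has $\nabla_{W}\xi=-A_{\xi}^{*}W+\nabla_{W}^{*t}\xi$ with $A_{\xi}^{*}W\in\Gamma(S(T\acute{N}))$ and $\nabla_{W}^{*t}\xi\in\Gamma(RadT\acute{N})$, so that $\nabla_{W}\xi\in\Gamma(RadT\acute{N})$ is equivalent to the vanishing of the screen component $A_{\xi}^{*}W$. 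Once this reformulation is in hand, the task becomes to relate this screen component of $\nabla_{W}\xi$ to the shape operator $A_{\breve{P}\xi}W$, and this is where the Golden structure enters through the preceding proposition.

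The key step is to specialize the identity $(\nabla_{U}S)W = A_{LW}U + K_{2}\breve{P}h^{\ell}(U,W)$ to $W=\xi\in\Gamma(RadT\acute{N})$. Since $\xi\in RadT\acute{N}$ forces $T\xi=0$ and $Q\xi=\xi$, one has $S\xi=\breve{P}T\xi=0$ and $L\xi=\breve{P}Q\xi=\breve{P}\xi$, so $(\nabla_{U}S)\xi=\nabla_{U}(S\xi)-S\nabla_{U}\xi=-S\nabla_{U}\xi$. The specialized identity becomes
\[
-S\nabla_{U}\xi \;=\; A_{\breve{P}\xi}U + K_{2}\breve{P}h^{\ell}(U,\xi).
\]

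I would then split both sides along $T\acute{N}=S(T\acute{N})\perp RadT\acute{N}$. By Theorem~3.1 the screen subbundle is $\breve{P}$-invariant, so $S\nabla_{U}\xi=\breve{P}(T\nabla_{U}\xi)$ lies in $\Gamma(S(T\acute{N}))$; by the very definition of $K_{2}$, the term $K_{2}\breve{P}h^{\ell}(U,\xi)$ lies in $\Gamma(RadT\acute{N})$. Matching screen and radical components of $A_{\breve{P}\xi}U$ gives
\[
TA_{\breve{P}\xi}U = -\breve{P}(T\nabla_{U}\xi), \qquad QA_{\breve{P}\xi}U = -K_{2}\breve{P}h^{\ell}(U,\xi),
\]
so $A_{\breve{P}\xi}U\in\Gamma(RadT\acute{N})$ holds precisely when $\breve{P}(T\nabla_{U}\xi)=0$.

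The final ingredient is the invertibility of $\breve{P}$ on the screen distribution: the defining relation $\breve{P}^{2}=\breve{P}+I$ rewrites as $\breve{P}(\breve{P}-I)=I$, so $\breve{P}$ is a bundle automorphism of $T\breve{N}$ with inverse $\breve{P}-I$, and its restriction to the $\breve{P}$-invariant subbundle $S(T\acute{N})$ is an isomorphism. Hence $\breve{P}(T\nabla_{U}\xi)=0$ forces $T\nabla_{U}\xi=0$, i.e.\ $\nabla_{U}\xi\in\Gamma(RadT\acute{N})$, and the converse is immediate. Combined with the Duggal-Bejancu criterion recalled in the first paragraph, this establishes both implications of the theorem. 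The main obstacle I expect is the careful bookkeeping of the direct-sum decompositions: verifying that the two terms on the right of the specialized identity really sit in complementary summands of $T\acute{N}$ (which is exactly what the projector $K_{2}$ is built to encode), so that comparing screen components legitimately isolates $A_{\breve{P}\xi}U$ and $\nabla_{U}\xi$ from one another.
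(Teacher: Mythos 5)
Your proposal is correct, but it reaches the conclusion by a different mechanism than the paper. Both arguments begin with the same reduction (used implicitly in the paper, explicitly by you): by the standard Duggal--Bejancu criterion, $\nabla$ is a metric connection iff $\nabla_{W}\xi\in\Gamma(RadT\acute{N})$ for all $W$ and $\xi$, equivalently iff the screen component $T\nabla_{W}\xi$ vanishes. From there the paper proceeds by an inner-product computation: it pairs $\breve{\nabla}_{W}\xi$ with screen vectors $U$, invokes the compatibility identity $\breve{g}(\breve{P}X,\breve{P}Y)=\breve{g}(X,\breve{P}Y)+\breve{g}(X,Y)$ together with $\breve{\nabla}_{W}\breve{P}\xi=\breve{P}\breve{\nabla}_{W}\xi$ and the Weingarten formula (\ref{10}) for $\breve{P}\xi\in\Gamma(ltrT\acute{N})$, and arrives at $\breve{g}(A_{\breve{P}\xi}W,\breve{P}U)=\breve{g}(\nabla_{W}\xi,\breve{P}U)$, concluding via nondegeneracy of $g$ on $S(T\acute{N})$ and surjectivity of $\breve{P}$ there. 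You instead reuse the already-derived tangential identity $(\nabla_{U}S)W=A_{LW}U+K_{2}\breve{P}h^{\ell}(U,W)$ of Proposition 3.2, specialize it at $W=\xi$ (so $S\xi=0$, $L\xi=\breve{P}\xi$), and read off the screen and radical components directly, closing the argument with the invertibility of $\breve{P}$ (inverse $\breve{P}-I$) on the $\breve{P}$-invariant screen bundle. The two routes rest on the same Gauss--Weingarten expansion of $\breve{\nabla}_{U}\breve{P}\xi=\breve{P}\breve{\nabla}_{U}\xi$; yours packages it through the proposition and avoids any metric pairing, making explicit exactly where screen-invariance (Theorem 3.1) and the automorphism property of $\breve{P}$ enter, and as a bonus it identifies the radical component $QA_{\breve{P}\xi}U=-K_{2}\breve{P}h^{\ell}(U,\xi)$. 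The paper's version is self-contained within the proof and trades the component bookkeeping for nondegeneracy of the screen metric. Both are sound; your only exposure is that your argument inherits whatever implicit assumptions Proposition 3.2 makes (e.g.\ on how $\breve{P}$ acts on $S(T\acute{N}^{\perp})$), but none of those affect the tangential equation you actually use.
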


\begin{proof}
Let's assume that $\nabla $ be metric connection. Then for $W\in \Gamma (T%
\acute{N})$ and $\xi \in \Gamma (RadT\acute{N})$ 
\begin{equation*}
\nabla _{W}\xi \in \Gamma (RadT\acute{N}),
\end{equation*}%
is obtained. Then for$\ U\in \Gamma (S(T\acute{N}))$%
\begin{align*}
g(\nabla _{W}\xi ,U)& =\breve{g}(\breve{\nabla}_{W}\xi -h^{l}(W,\xi
)-h^{s}(W,\xi ),U), \\
0& =\breve{g}(\breve{\nabla}_{W}\xi ,U).
\end{align*}%
On the other hand if the following equation is used 
\begin{align*}
\breve{g}(\breve{P}X,\breve{P}Y)& =\breve{g}(X,\breve{P}Y)+\breve{g}(X,Y), \\
0& =\breve{g}(\breve{P}\breve{\nabla}_{W}\xi ,\breve{P}U)-\breve{g}(\breve{%
\nabla}_{W}\xi ,\breve{P}U), \\
0& =\breve{g}(\breve{\nabla}_{W}\breve{P}\xi ,\breve{P}U)-\breve{g}(\breve{%
\nabla}_{W}\xi ,\breve{P}U), \\
0& =\breve{g}(-A_{\breve{P}\xi }W+\nabla _{W}^{l}\breve{P}\xi +D^{s}(W,%
\breve{P}\xi ),\breve{P}U)-\breve{g}(\nabla _{W}\xi ,\breve{P}U) \\
0& =-\breve{g}(A_{\breve{P}\xi }W,\breve{P}U)
\end{align*}%
is obtained. Where the result is that $A_{\breve{P}\xi }W$ is valued at$\
RadT\acute{N}$. Conversely assuming $A_{\breve{P}\xi }W$ is valued at $RadT%
\acute{N}$ for $U\in \Gamma (S(T\acute{N}))$%
\begin{align*}
g(A_{\breve{P}\xi }W,U)& =0, \\
g(-\breve{\nabla}_{W}\breve{P}\xi +\nabla _{W}^{l}\breve{P}\xi +D^{s}(W,%
\breve{P}\xi ),U)& =0, \\
-g(\breve{\nabla}_{W}P\xi ,U)& =0,
\end{align*}%
\begin{align*}
g(\breve{P}\breve{\nabla}_{W}\xi ,U)=0& \Rightarrow g(\breve{\nabla}_{W}\xi ,%
\breve{P}U)=0, \\
& \Rightarrow g(\nabla _{W}\xi +h^{l}(W,\xi )+h^{s}(W,\xi ),\breve{P}U)=0, \\
& \Rightarrow g(\nabla _{W}\xi ,\breve{P}U)=0, \\
& \Longrightarrow \nabla _{W}\xi \in \Gamma (RadT\acute{N})
\end{align*}%
is obtained.
\end{proof}

Now, let's examine the conditions for integrable distributions in the
direction of definition of radical transversal lightlike submanifolds.

\begin{theorem}
Let $\acute{N}$ be radical transversal lightlike submanifold of locally
Golden semi-Riemannian $\breve{N}$ manifold. In this case necessary and
sufficient condition for the distribution $S(T\acute{N})$ integrable is that
for $\forall W,U\in \Gamma (S(T\acute{N}))$%
\begin{equation*}
h^{l}(U,SW)=h^{l}(W,SU).
\end{equation*}
\end{theorem}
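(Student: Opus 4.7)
The strategy is to reformulate the integrability of $S(T\acute{N})$ as a vanishing condition on the $RadT\acute{N}$-component of the Lie bracket, and then use the radical transversal hypothesis $\breve{P}RadT\acute{N}=ltr(T\acute{N})$ together with parallelism of $\breve{P}$ to translate this condition into one involving $h^{l}$.

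First I would recall that, because $T\acute{N}=S(T\acute{N})\perp RadT\acute{N}$, the distribution $S(T\acute{N})$ is involutive if and only if $[W,U]$ has no $RadT\acute{N}$-component for every $W,U\in\Gamma(S(T\acute{N}))$. Since the $S(T\acute{N})$-part of any tangent vector is $\breve{g}$-orthogonal to $ltr(T\acute{N})$ while the pairing between $RadT\acute{N}$ and $ltr(T\acute{N})$ is non-degenerate, this is equivalent to $\breve{g}([W,U],N)=0$ for all $N\in\Gamma(ltr(T\acute{N}))$, and by the radical transversal condition equivalent to
\[
\breve{g}([W,U],\breve{P}\xi)=0\qquad\text{for every }\xi\in\Gamma(RadT\acute{N}).
\]

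The main calculation then uses the $\breve{P}$-compatibility of $\breve{g}$ to slide $\breve{P}$ onto the bracket, giving $\breve{g}([W,U],\breve{P}\xi)=\breve{g}(\breve{P}[W,U],\xi)$. Because $\breve{N}$ is locally Golden, $\breve{\nabla}\breve{P}=0$, and Theorem 3.1 gives $\breve{P}W=SW$, $\breve{P}U=SU$ for $W,U\in\Gamma(S(T\acute{N}))$; combined with the torsion-freeness of $\breve{\nabla}$ this yields
\[
\breve{P}[W,U]=\breve{\nabla}_{W}SU-\breve{\nabla}_{U}SW.
\]
I would then expand each term by the Gauss formula $\breve{\nabla}_{W}SU=\nabla_{W}SU+h^{l}(W,SU)+h^{s}(W,SU)$ and pair with $\xi$. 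The $h^{s}$-contributions vanish because $S(T\acute{N}^{\perp})\perp RadT\acute{N}$, and the tangential term $\breve{g}(\nabla_{W}SU,\xi)$ also vanishes: $\nabla_{W}SU$ splits into an $S(T\acute{N})$-piece orthogonal to $\xi$ and a $RadT\acute{N}$-piece that pairs trivially with $\xi$ since $RadT\acute{N}$ is totally null. What survives is
\[
\breve{g}([W,U],\breve{P}\xi)=\breve{g}\bigl(h^{l}(W,SU)-h^{l}(U,SW),\,\xi\bigr).
\]

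To conclude, since $h^{l}(W,SU)-h^{l}(U,SW)\in\Gamma(ltr(T\acute{N}))$ and the $\breve{g}$-pairing between $ltr(T\acute{N})$ and $RadT\acute{N}$ is non-degenerate, the right-hand side vanishes for all $\xi$ if and only if $h^{l}(U,SW)=h^{l}(W,SU)$, which combined with the reformulation of integrability from the first paragraph yields the claimed equivalence. The main obstacle is the careful bookkeeping of which cross terms drop out when one pairs the expanded expressions with $\xi$; once one keeps track of the full orthogonal decomposition $T\breve{N}|_{\acute{N}}=S(T\acute{N})\perp[RadT\acute{N}\oplus ltr(T\acute{N})]\perp S(T\acute{N}^{\perp})$ and of the null character of $RadT\acute{N}$, everything falls into place.
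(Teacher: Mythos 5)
Your argument is correct and establishes the stated equivalence. It rests on the same key identity as the paper's proof, namely that for $W,U\in\Gamma(S(T\acute{N}))$ the $ltr(T\acute{N})$-component of $\breve{P}[W,U]$ equals $h^{l}(W,SU)-h^{l}(U,SW)$, but you reach it by a different route. The paper antisymmetrizes its component equation $h^{l}(U,SW)+\nabla_{U}^{l}LW-L\nabla_{U}W-K_{1}\breve{P}h^{l}(U,W)=0$ (derived earlier by decomposing $\breve{\nabla}_{U}\breve{P}W=\breve{P}\breve{\nabla}_{U}W$), so it must invoke the symmetry of $h^{l}$ to cancel the $K_{1}\breve{P}h^{l}$ terms and, tacitly, the vanishing of $LW$ and $LU$ on screen sections to kill the $\nabla^{l}L$ terms, arriving at $h^{l}(U,SW)-h^{l}(W,SU)=L[U,W]$. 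You instead pair $\breve{P}[W,U]$ directly against $\xi\in\Gamma(RadT\acute{N})$ using $\breve{P}$-compatibility of the metric, parallelism of $\breve{P}$, torsion-freeness of $\breve{\nabla}$ and the Gauss formula; this sidesteps the $K_{1}\breve{P}h^{l}$ bookkeeping entirely (those terms never appear because you expand $\breve{\nabla}_{W}\breve{P}U$ rather than $\breve{P}\breve{\nabla}_{W}U$), and it has the merit of making explicit two points the paper leaves implicit: the Frobenius reformulation of integrability as the vanishing of the $RadT\acute{N}$-component of the bracket, and the non-degeneracy of the pairing between $RadT\acute{N}$ and $ltr(T\acute{N})$ needed to pass from $\breve{g}\bigl(h^{l}(W,SU)-h^{l}(U,SW),\xi\bigr)=0$ for all $\xi$ to the equality of the second fundamental forms. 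Both proofs are valid; yours is self-contained, while the paper's leans on its earlier proposition.
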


\begin{proof}
For $W,U\in \Gamma (S(T\acute{N}))$ roles of $U$ and $W$ are changed in the (%
\ref{9}) equation 
\begin{align*}
h^{l}(U,SW)+\nabla _{U}^{l}LW-L\nabla _{U}W-K_{1}\breve{P}h^{l}(U,W)& =0, \\
h^{l}(W,SU)+\nabla _{W}^{l}LU-L\nabla _{W}U-K_{1}\breve{P}h^{l}(W,U)& =0,
\end{align*}%
\begin{equation*}
\Rightarrow \left( 
\begin{array}{c}
h^{l}(U,SW)-h^{l}(W,SU)+\nabla _{U}^{l}LW \\ 
-\nabla _{W}^{l}LU-L\nabla _{U}W+L\nabla _{W}U- \\ 
K_{1}\breve{P}h^{l}(U,W)+K_{1}\breve{P}h^{l}(W,U)%
\end{array}%
\right) =0
\end{equation*}%
\begin{equation}
\Rightarrow \left( 
\begin{array}{c}
h^{l}(U,SW)-h^{l}(W,SU) \\ 
-K_{1}(\breve{P}h^{l}(U,W)-\breve{P}h^{l}(W,U))%
\end{array}%
\right) =L\left[ U,W\right]   \label{10}
\end{equation}%
is obtained. Because of $h^{l}$ symmetric we get 
\begin{equation}
h^{l}(U,SW)-h^{l}(W,SU)=L\left[ U,W\right]   \label{11}
\end{equation}%
Proof is obtained from equation (\ref{11}).
\end{proof}

\begin{theorem}
Let $\acute{N}$ be radical transversal lightlike submanifold of locally
Golden semi-Riemannian $\breve{N}$ manifold. In this case necessary and
sufficient condition for the radical distribution integrable is that for $%
\forall W,U\in \Gamma (RadT\acute{N})$%
\begin{equation*}
A_{LU}W=A_{LW}U.
\end{equation*}
\end{theorem}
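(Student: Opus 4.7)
The plan is to mimic the antisymmetrization argument used in the previous theorem on $S(T\acute{N})$, but now applied to equation (\ref{7}) with both vector fields drawn from the radical distribution. For $U,W\in\Gamma(RadT\acute{N})$ we have $TU=TW=0$, hence $SU=SW=0$, while $LU=\breve{P}U$ and $LW=\breve{P}W$ both lie in $\Gamma(ltrT\acute{N})$. The left-hand side $(\nabla_{U}S)W=\nabla_{U}SW-S\nabla_{U}W$ therefore collapses to $-S\nabla_{U}W$, and equation (\ref{7}) specializes to
$$-S\nabla_{U}W = A_{LW}U + K_{2}\breve{P}h^{l}(U,W).$$

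Next, interchanging $U$ and $W$ and subtracting, the symmetry of $h^{l}$ wipes out the $K_{2}$-terms, leaving
$$-S\bigl(\nabla_{U}W-\nabla_{W}U\bigr) = A_{LW}U - A_{LU}W.$$
Because $\breve{\nabla}$ is torsion-free and $h^{l},h^{s}$ are symmetric, the induced connection $\nabla$ inherits torsion-freeness, so $\nabla_{U}W-\nabla_{W}U=[U,W]$, and we arrive at the key identity
$$A_{LU}W - A_{LW}U = S[U,W] = \breve{P}\bigl(T[U,W]\bigr).$$

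To conclude I would invoke two facts: by Theorem 3.1 the screen distribution $S(T\acute{N})$ is $\breve{P}$-invariant, and the structural equation $\breve{P}^{2}=\breve{P}+I$ rearranges to $\breve{P}(\breve{P}-I)=I$, exhibiting $\breve{P}-I$ as a global inverse of $\breve{P}$; in particular $\breve{P}|_{S(T\acute{N})}$ is injective. Therefore $\breve{P}T[U,W]=0$ if and only if $T[U,W]=0$, that is, $[U,W]\in\Gamma(RadT\acute{N})$, which is exactly involutivity of the radical distribution. Hence $A_{LU}W=A_{LW}U$ if and only if $RadT\acute{N}$ is integrable. The only non-mechanical step I anticipate is this last invertibility argument; without it one would only recover the weaker condition that $T[U,W]$ lies in $\ker\breve{P}$ rather than that it vanishes, so the sharp form of the equivalence would be lost.
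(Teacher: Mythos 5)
Your proposal is correct and follows essentially the same route as the paper: specialize equation (\ref{7}) to $U,W\in\Gamma(RadT\acute{N})$ so that $SW=SU=0$, antisymmetrize, and use the symmetry of $h^{l}$ together with torsion-freeness of $\nabla$ to obtain $S[W,U]=A_{LW}U-A_{LU}W$. Your closing observation that $\breve{P}(\breve{P}-I)=I$ makes $\breve{P}$ injective on the ($\breve{P}$-invariant) screen distribution is a detail the paper leaves implicit, and it is exactly what is needed to pass from $S[W,U]=\breve{P}T[W,U]=0$ to $[W,U]\in\Gamma(RadT\acute{N})$.
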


\begin{proof}
For $W,U\in \Gamma (RadT\acute{N})$ taking into account of (\ref{7}) we get 
\begin{equation*}
(\nabla _{U}S)W=A_{LW}U+K_{2}\breve{P}h^{l}(U,W),
\end{equation*}%
where because of $W\in \Gamma (RadT\acute{N}),\quad SW=0.$

Thus we obtain 
\begin{align*}
\nabla _{U}SW-S\nabla _{U}W& =A_{LW}U+K_{2}\breve{P}h^{l}(U,W). \\
\Rightarrow -S\nabla _{U}W=& A_{LW}U+K_{2}\breve{P}h^{l}(U,W).
\end{align*}%
Here, if the roles of $U$ and $W$ are changed 
\begin{align*}
-S\nabla _{U}W& =A_{LW}U+K_{2}\breve{P}h^{l}(U,W), \\
-S\nabla _{W}U& =A_{LU}W+K_{2}\breve{P}h^{l}(W,U),
\end{align*}%
\begin{equation*}
\Rightarrow S(\nabla _{W}U-\nabla _{U}W)=\left( 
\begin{array}{c}
A_{LW}U-A_{LU}W \\ 
+K_{2}\breve{P}(h^{l}(U,W)-h^{l}(W,U))%
\end{array}%
\right) ,
\end{equation*}%
also, given that $h^{l}$ symmetric we get 
\begin{equation*}
S\left[ W,U\right] =A_{LW}U-A_{LU}W.
\end{equation*}%
Thus the proof is complete.
\end{proof}

\begin{theorem}
Let $\acute{N}$ be radical transversal lightlike submanifold of locally
Golden semi-Riemannian $\breve{N}$ manifold. In this case, necessary and
sufficient condition for radical distribution definition of totally geodesic
foliation on $\acute{N}$ is%
\begin{equation*}
h^{\ast }(W,\breve{P}Z)=h^{\ast }(W,Z)
\end{equation*}%
for $W\in \Gamma (RadT\acute{N})$, $Z\in \Gamma (S(T\acute{N}))$.
\end{theorem}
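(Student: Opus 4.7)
\emph{Plan.} The plan is to translate the totally-geodesic-foliation condition for $RadT\acute{N}$ into a scalar inner-product identity and then push the Golden structure through it until only $h^{\ast }$ remains. First I would note that $RadT\acute{N}$ defines a totally geodesic foliation on $\acute{N}$ if and only if $\nabla _{W}V\in \Gamma (RadT\acute{N})$ for all $W,V\in \Gamma (RadT\acute{N})$; since $T\acute{N}=RadT\acute{N}\perp S(T\acute{N})$ and $g$ is non-degenerate on $S(T\acute{N})$, this reduces to $g(\nabla _{W}V,Z)=0$ for every $Z\in \Gamma (S(T\acute{N}))$. By the Gauss formula (\ref{9}) together with $h^{\ell }(W,V)\in ltr(T\acute{N})$ and $h^{s}(W,V)\in S(T\acute{N}^{\bot })$, both orthogonal to $S(T\acute{N})$, this is equivalent to $\breve{g}(\breve{\nabla}_{W}V,Z)=0$.

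Next, $V\in \Gamma (T\acute{N}^{\bot })$ forces $\breve{g}(V,Z)=0$, so the metric property of $\breve{\nabla}$ gives $\breve{g}(\breve{\nabla}_{W}V,Z)=-\breve{g}(V,\breve{\nabla}_{W}Z)$. I would then rewrite the Golden compatibility (\ref{23}) as $\breve{g}(V,Y)=\breve{g}(\breve{P}V,\breve{P}Y)-\breve{g}(\breve{P}V,Y)$ and apply it with $Y=\breve{\nabla}_{W}Z$; combining with the parallelism (\ref{21a}) $\breve{P}\breve{\nabla}_{W}Z=\breve{\nabla}_{W}\breve{P}Z$ yields
\[
\breve{g}(\breve{\nabla}_{W}V,Z)=\breve{g}(\breve{P}V,\breve{\nabla}_{W}Z)-\breve{g}(\breve{P}V,\breve{\nabla}_{W}\breve{P}Z).
\]
The defining equation (\ref{1}) of a radical transversal submanifold gives $\breve{P}V\in \Gamma (ltr(T\acute{N}))$, while Theorem 3.1 gives $\breve{P}Z\in \Gamma (S(T\acute{N}))$, so both $\breve{\nabla}_{W}Z$ and $\breve{\nabla}_{W}\breve{P}Z$ can be expanded via (\ref{9}) and (\ref{14}). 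When paired with $\breve{P}V\in ltr(T\acute{N})$, the $S(T\acute{N})$-component of $\nabla _{W}\cdot $ vanishes (since $ltr\perp S(T\acute{N})$), the $h^{\ell }$-term vanishes ($ltr$ is totally null), and the $h^{s}$-term vanishes ($ltr\perp S(T\acute{N}^{\bot })$); only $h^{\ast }(W,\cdot )\in RadT\acute{N}$ survives. This collapses the identity to
\[
g(\nabla _{W}V,Z)=\breve{g}\bigl(\breve{P}V,\,h^{\ast }(W,Z)-h^{\ast }(W,\breve{P}Z)\bigr).
\]

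To close the argument, I would observe that as $V$ varies over $\Gamma (RadT\acute{N})$ the vector $\breve{P}V$ sweeps all of $\Gamma (ltr(T\acute{N}))$, and the pairing between $RadT\acute{N}$ and $ltr(T\acute{N})$ under $\breve{g}$ is non-degenerate. Since $h^{\ast }(W,Z)-h^{\ast }(W,\breve{P}Z)$ takes values in $\Gamma (RadT\acute{N})$, the vanishing of $g(\nabla _{W}V,Z)$ for every admissible $V$ and $Z$ is equivalent to $h^{\ast }(W,\breve{P}Z)=h^{\ast }(W,Z)$, giving both directions. The main obstacle I expect is purely bookkeeping: several orthogonality relations must be applied in the same step, and one must keep the Golden operator $\breve{P}$ distinct from the screen projection also denoted $\breve{P}$ in (\ref{14}); invoking Theorem 3.1 at the outset to certify $\breve{P}Z\in S(T\acute{N})$ is what legitimately brings the decomposition (\ref{14}) into play for $h^{\ast }(W,\breve{P}Z)$.
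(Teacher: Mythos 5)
Your proposal is correct and follows essentially the same route as the paper's proof: reduce the foliation condition to $g(\nabla _{W}V,Z)=0$, use metricity of $\breve{\nabla}$ together with the Golden compatibility and parallelism to arrive at $g(\nabla _{W}V,Z)=\breve{g}(\breve{P}V,\,h^{\ast }(W,Z)-h^{\ast }(W,\breve{P}Z))$, and read off the equivalence. If anything, you are more explicit than the paper about the final step --- the non-degenerate pairing of $RadT\acute{N}$ with $ltr(T\acute{N})$ and the converse direction --- which the paper leaves implicit.
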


\begin{proof}
Necessary and sufficient condition for $RadT\acute{N}$ distribution
definition of totally geodesic foliation is 
\begin{equation*}
g(\nabla _{W}U,Z)=0,
\end{equation*}%
for $W$, $U\in \Gamma (RadT\acute{N})$, $Z\in \Gamma (S(T\acute{N}))$. Where 
\begin{equation*}
g(\nabla _{W}U,Z)=\breve{g}(\breve{\nabla}_{W}U,Z).
\end{equation*}%
Since $\breve{\nabla}$ is a metric connection 
\begin{equation*}
g(\nabla _{W}U,Z)=W\breve{g}(U,Z)-g(U,\breve{\nabla}_{W}Z),
\end{equation*}%
is obtained. If 
\begin{equation*}
g(\breve{P}X,\breve{P}Y)=g(X,\breve{P}Y)+g(X,Y),
\end{equation*}%
equation is used here for $\breve{P}Z\in S(T\acute{N})$, $Z\in S(T\acute{N})$%
, $W\in \Gamma (RadT\acute{N})$, $\breve{P}U\in \Gamma (ltrT\acute{N})$ 
\begin{align*}
g(\nabla _{W}U,Z)& =-\breve{g}(\breve{P}U,\breve{P}\breve{\nabla}_{W}Z)+%
\breve{g}(U,\breve{P}\breve{\nabla}_{W}Z) \\
& =-\breve{g}(\breve{P}U,\breve{\nabla}_{W}\breve{P}Z)+\breve{g}(\breve{P}U,%
\breve{\nabla}_{W}Z) \\
& =\breve{g}(\breve{P}U,-\breve{\nabla}_{W}\breve{P}Z+\breve{\nabla}_{W}Z) \\
& =\breve{g}(\breve{P}U,-\breve{\nabla}_{W}\breve{P}Z+\nabla _{W}Z) \\
& =\breve{g}(\breve{P}U,-\nabla _{W}^{\ast }PZ-h^{\ast }(W,\breve{P}%
Z)+\nabla _{W}^{\ast }Z+h^{\ast }(W,Z)) \\
& =\breve{g}(\breve{P}U,-h^{\ast }(W,\breve{P}Z)+h^{\ast }(W,Z)
\end{align*}%
it ends with proof.
\end{proof}

\begin{theorem}
Let $\acute{N}$ be radical transversal lightlike submanifold of locally
Golden semi-Riemannian $\breve{N}$ manifold. Then necessary and sufficient
condition for screen distribution defining \ totally geodesic foliation on $%
\acute{N}$ for $W,U\in \Gamma (S(T\acute{N}))$ and there is no component in $%
\Gamma (ltrT\acute{N})$ in $\breve{P}N$ \ for $N\in \Gamma (ltrT\acute{N})$
or 
\begin{equation*}
h^{\ast }(W,\breve{P}U)+K_{2}h^{l}(W,\breve{P}U)=h^{\ast
}(W,U)+K_{2}h^{l}(W,U)
\end{equation*}
\end{theorem}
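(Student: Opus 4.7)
The plan is to follow the template of Theorem~3.4, translating the condition ``$S(T\acute{N})$ defines a totally geodesic foliation on $\acute{N}$'' into a pairing equation and then transporting it across the Golden structure $\breve{P}$. By definition the condition means $\nabla_{W}U\in\Gamma(S(T\acute{N}))$ for all $W,U\in\Gamma(S(T\acute{N}))$; since $T\acute{N}=S(T\acute{N})\perp RadT\acute{N}$ and $RadT\acute{N}$ is dually paired with $ltr(T\acute{N})$ by $\breve{g}$, this is equivalent to $\breve{g}(\nabla_{W}U,N)=0$ for every $N\in\Gamma(ltr(T\acute{N}))$.

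First I would use the Gauss equation (\ref{9}) to replace $\nabla_{W}U$ by $\breve{\nabla}_{W}U$, at the cost of correcting by $h^{\ell}(W,U)+h^{s}(W,U)$; these corrections drop out when paired with $N$, since $h^{\ell}(W,U),N\in ltr(T\acute{N})$ and $ltr(T\acute{N})$ is totally null, while $h^{s}(W,U)\in \Gamma(S(T\acute{N}^{\bot}))$ is already orthogonal to $ltr(T\acute{N})$. Hence $\breve{g}(\nabla_{W}U,N)=\breve{g}(\breve{\nabla}_{W}U,N)$. Next I would invoke the Golden identity (\ref{23}), rewritten as $\breve{g}(X,N)=\breve{g}(\breve{P}X,\breve{P}N)-\breve{g}(X,\breve{P}N)$, together with the parallelism (\ref{21a}) of $\breve{P}$, to obtain
\[
\breve{g}(\breve{\nabla}_{W}U,N)=\breve{g}(\breve{\nabla}_{W}\breve{P}U,\breve{P}N)-\breve{g}(\breve{\nabla}_{W}U,\breve{P}N).
\]

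Since $\breve{P}U\in\Gamma(S(T\acute{N}))$ by Theorem~3.1, and $\breve{P}N=K_{1}\breve{P}N+K_{2}\breve{P}N$ with $K_{1}\breve{P}N\in\Gamma(ltr(T\acute{N}))$ and $K_{2}\breve{P}N\in\Gamma(RadT\acute{N})$, I would expand both covariant derivatives via (\ref{9}) and the screen decomposition (\ref{14}). Pairings against $K_{1}\breve{P}N\in ltr(T\acute{N})$ only see the $RadT\acute{N}$ part of a tangent vector, i.e. the $h^{\ast}$ term; pairings against $K_{2}\breve{P}N\in RadT\acute{N}$ only see the $ltr(T\acute{N})$ part, i.e. the $h^{\ell}$ term; the remaining contributions coming from $S(T\acute{N})$ and $S(T\acute{N}^{\bot})$ vanish by orthogonality.

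The main obstacle is the careful bookkeeping in this step: one must not conflate the two roles of the symbol $\breve{P}$ (Golden structure versus screen projection appearing before (\ref{14})), and the pairings of $\{h^{\ast},h^{\ell},h^{s}\}$ against $\{K_{1}\breve{P}N,K_{2}\breve{P}N\}$ must each be evaluated separately. After this tabulation one finds
\[
\breve{g}(\nabla_{W}U,N)=\breve{g}\bigl(h^{\ast}(W,\breve{P}U)-h^{\ast}(W,U),\,K_{1}\breve{P}N\bigr)+\breve{g}\bigl(h^{\ell}(W,\breve{P}U)-h^{\ell}(W,U),\,K_{2}\breve{P}N\bigr),
\]
and requiring this to vanish for every $N\in\Gamma(ltr(T\acute{N}))$ produces exactly the stated alternative: either $\breve{P}N$ has no $ltr(T\acute{N})$ component (killing the first summand automatically) or the displayed equation of the theorem must hold. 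The converse direction is immediate by reversing the same chain of equalities.
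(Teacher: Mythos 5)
Your proposal is correct and follows essentially the same route as the paper: reduce the foliation condition to $\breve{g}(\breve{\nabla}_{W}U,N)=0$, rewrite it via the identity $\breve{g}(X,N)=\breve{g}(\breve{P}X,\breve{P}N)-\breve{g}(X,\breve{P}N)$ and the parallelism of $\breve{P}$, then expand with the Gauss and screen-decomposition formulas so that only the $h^{\ast}$ terms survive against the $ltr(T\acute{N})$ part of $\breve{P}N$ and only the $h^{\ell}$ terms survive against its $RadT\acute{N}$ part. Your final pairing identity is exactly the paper's displayed equation (written there with the $K_{1},K_{2}$ projections absorbed into the arguments), and the concluding alternative is read off in the same way.
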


\begin{proof}
Necessary and sufficient condition for $S(T\acute{N})$ defining \ totally
geodesic foliation 
\begin{equation*}
g(\nabla_{W}U,N)=0
\end{equation*}
for $W,U\in\Gamma(S(T\acute{N}))$, $N\in\Gamma(ltrT\acute{N})$

From here for $\breve{P}N\in ltrT\acute{N}+RadT\acute{N}$ and $\breve{P}U\in
S(T\acute{N})$%
\begin{align*}
g(\nabla _{W}U,N)& =\breve{g}(\breve{\nabla}_{W}U-h^{l}(W,U)-h^{s}(W,U),N) \\
& =\breve{g}(\breve{\nabla}_{W}U,N) \\
& =\breve{g}(P\breve{\nabla}_{W}U,\breve{P}N)-\breve{g}(\breve{\nabla}_{W}U,%
\breve{P}N) \\
& =\breve{g}(\breve{\nabla}_{W}\breve{P}U,\breve{P}N)-\breve{g}(\breve{\nabla%
}_{W}U,\breve{P}N) \\
& =\breve{g}(\nabla _{W}\breve{P}U+h^{l}(W,\breve{P}U)+h^{s}(W,\breve{P}U),%
\breve{P}N) \\
& -\breve{g}(\nabla _{W}U+h^{l}(W,U)+h^{s}(W,U),\breve{P}N) \\
& =\breve{g}(\nabla _{W}^{\ast }\breve{P}U+h^{\ast }(W,\breve{P}U)+h^{l}(W,%
\breve{P}U)),\breve{P}N) \\
& -\breve{g}(\nabla _{W}^{\ast }U+h^{\ast }(W,U)+h^{l}(W,U),\breve{P}N) \\
\breve{g}(\nabla _{W}U,N)& =\breve{g}(h^{\ast }(W,\breve{P}U)+K_{2}h^{l}(W,%
\breve{P}U)-h^{\ast }(W,U)-K_{2}h^{l}(W,U),\breve{P}N) \\
0& =\breve{g}(h^{\ast }(W,\breve{P}U)+K_{2}h^{l}(W,\breve{P}U)-h^{\ast
}(W,U)-K_{2}h^{l}(W,U),\breve{P}N)
\end{align*}%
here is the possible outcome: There is no component in $\breve{P}N$ in $ltrT%
\acute{N}$ or 
\begin{equation*}
h^{\ast }(W,\breve{P}U)+K_{2}h^{l}(W,\breve{P}U)=h^{\ast
}(W,U)+K_{2}h^{l}(W,U)
\end{equation*}
\end{proof}

\section{TRANSVERSAL LIGHTLIKE SUBMANIFOLDS}

In this section, definition of transversal lightlike submanifolds is given
and the geometry of the distributions is examined.

\begin{definition}
\label{bir}Let $(\acute{N},g,S(T\acute{N}),S(T\acute{N}))$ be lightlike
submanifold of Golden semi-Riemannian manifold $(\breve{N},\breve{g})$. If
the following conditions are provided the submanifold is called a
transversal lightlike submanifold. 
\begin{align*}
\breve{P}(RadT\acute{N})& =ltrT\acute{N}, \\
\breve{P}(S(T\acute{N}))& \subseteq S(T\acute{N}).
\end{align*}%
We show with the subbundle $\mu $ which is orthogonal complement to the $%
\breve{P}(S(T\acute{N}))$ in the $S(T\acute{N}).$
\end{definition}

\begin{proposition}
Let $\acute{N}$ be, transversal lightlike submanifold of Golden
semi-Riemannian $\breve{N}$ manifold. In this case $\mu $ distribution is
invariant to $\breve{P}$.
\end{proposition}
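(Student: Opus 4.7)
The plan is to show that for any local section $X\in\Gamma(\mu)$, the vector field $\breve{P}X$ still lies in $\mu$. By the definition of $\mu$ as the orthogonal complement of $\breve{P}(S(T\acute{N}))$ in $S(T\acute{N}^{\perp})$, this amounts to verifying two things: (a) $\breve{P}X$ is orthogonal to $\breve{P}(S(T\acute{N}))$, and (b) $\breve{P}X\in\Gamma(S(T\acute{N}^{\perp}))$, which itself breaks into orthogonality with $T\acute{N}$ and absence of a $RadT\acute{N}$ component.

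First I would prove (a). For any $Y\in\Gamma(S(T\acute{N}))$ I apply the compatibility identity (\ref{23}) to get
\begin{equation*}
\breve{g}(\breve{P}X,\breve{P}Y)=\breve{g}(\breve{P}X,Y)+\breve{g}(X,Y).
\end{equation*}
The second term vanishes because $X\in S(T\acute{N}^{\perp})$ and $Y\in S(T\acute{N})$. For the first term I rewrite $\breve{g}(\breve{P}X,Y)=\breve{g}(X,\breve{P}Y)$ via (\ref{21}), and since $\breve{P}Y\in\breve{P}(S(T\acute{N}))$ and $X\in\mu$, this also vanishes. Thus $\breve{P}X\perp\breve{P}(S(T\acute{N}))$.

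Next I would establish (b). For orthogonality with $T\acute{N}$, I decompose an arbitrary $Z\in\Gamma(T\acute{N})$ as $Z=Z_{0}+Z_{1}$ with $Z_{0}\in\Gamma(RadT\acute{N})$ and $Z_{1}\in\Gamma(S(T\acute{N}))$, and compute $\breve{g}(\breve{P}X,Z)=\breve{g}(X,\breve{P}Z_{0})+\breve{g}(X,\breve{P}Z_{1})$. The first summand vanishes because $\breve{P}Z_{0}\in ltrT\acute{N}$ and $X\in S(T\acute{N}^{\perp})$, which are orthogonal by the ambient decomposition; the second vanishes because $\breve{P}Z_{1}\in\breve{P}(S(T\acute{N}))$ and $X\in\mu$. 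To rule out a $RadT\acute{N}$ component I pair $\breve{P}X$ with an arbitrary $N\in\Gamma(ltrT\acute{N})$: writing $N=\breve{P}\xi$ for the unique $\xi\in\Gamma(RadT\acute{N})$ provided by (\ref{1}) of Definition \ref{bir}, I use (\ref{20}) to obtain $\breve{P}N=\breve{P}^{2}\xi=\breve{P}\xi+\xi=N+\xi\in\Gamma(ltrT\acute{N}\oplus RadT\acute{N})$, so $\breve{g}(\breve{P}X,N)=\breve{g}(X,\breve{P}N)=0$ once again because $X\in S(T\acute{N}^{\perp})$.

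The main obstacle is the second half of step (b): a direct application of $\breve{P}$-invariance of the pairing only gives $\breve{P}X\in T\acute{N}^{\perp}$, which is strictly larger than $S(T\acute{N}^{\perp})$ because of the radical part. Eliminating the $RadT\acute{N}$ component requires the trick of computing $\breve{P}N$ explicitly from the Golden relation $\breve{P}^{2}=\breve{P}+I$; once that computation is in place, the rest is a clean bookkeeping exercise across the decomposition $T\breve{N}|_{\acute{N}}=S(T\acute{N})\perp[RadT\acute{N}\oplus ltrT\acute{N}]\perp S(T\acute{N}^{\perp})$.
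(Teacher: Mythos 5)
Your proof is correct and follows essentially the same route as the paper's: both verify that $\breve{P}X$ is orthogonal to each summand of $T\breve{N}|_{\acute{N}}$ other than $\mu$ by means of the compatibility identities $\breve{g}(\breve{P}W,U)=\breve{g}(W,\breve{P}U)$ and $\breve{g}(\breve{P}W,\breve{P}U)=\breve{g}(\breve{P}W,U)+\breve{g}(W,U)$, and both rely on the same key computation $\breve{P}N=\breve{P}^{2}\xi=\breve{P}\xi+\xi=N+\xi$ to eliminate the $RadT\acute{N}$ component. The only difference is organizational (you group the four orthogonality checks into two steps), so nothing further is needed.
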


\begin{proof}
For $V\in \Gamma (\mu )$, $\xi \in \Gamma (RadT\acute{N})$ and $N\in \Gamma
(ltrT\acute{N})$%
\begin{equation*}
\breve{g}(\breve{P}W,\breve{P}U)=g(W,\breve{P}U)+g(W,U),
\end{equation*}%
from the above equation and from 
\begin{equation*}
\breve{P}^{2}=\breve{P}+I,
\end{equation*}%
expression we have 
\begin{equation}
\breve{g}(\breve{P}V,\xi )=g(V,\breve{P}\xi )=0,  \label{12}
\end{equation}

and%
\begin{equation}
\breve{g}(\breve{P}V,N)=\breve{g}(V,\breve{P}N)=0.  \label{13}
\end{equation}%
In this case $\breve{P}V$ in $RadT\acute{N}$ and $ltrT\acute{N}$ have no
components.

Similarly 
\begin{equation}
\breve{g}(\breve{P}V,W)=\breve{g}(V,\breve{P}W)=0,  \label{14}
\end{equation}%
\begin{equation}
\breve{g}(\breve{P}V,\breve{P}V_{1})=\breve{g}(V,\breve{P}^{2}V_{1})=\breve{g%
}(V,\breve{P}V_{1})+g(V,V_{1})=0,  \label{15}
\end{equation}%
obtained, where $W\in \Gamma (S(T\acute{N}))$, $\breve{P}V_{1}\in \Gamma (%
\breve{P}S(T\acute{N})),$ $V_{1}\in \Gamma (S(T\acute{N}))$. From here it is
seen that there is no $\breve{P}V$ in $S(T\acute{N})$ and no components in $%
\breve{P}(S(T\acute{N}))$. From the equations (\ref{12}),(\ref{13}),(\ref{14}%
) and (\ref{15}), the result is that $\mu $ distribution is invariant.
\end{proof}

\begin{proposition}
\label{birbir}Let $\breve{N}$ be Golden semi-Riemannian manifold. In this
case, there is no 1-lightlike transversal submanifold of $\breve{N}$
manifold.
\end{proposition}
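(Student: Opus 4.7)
The plan is to replay, with only cosmetic changes, the proof of the analogous Proposition~3.1 for radical transversal lightlike submanifolds. This works because the defining condition $\breve{P}(RadT\acute{N}) = ltr(T\acute{N})$ is the same in Definition~\ref{bir} as in the radical transversal definition, and that single condition on the radical distribution is all that was actually used there.

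I would argue by contradiction. Assume $\acute{N}$ is $1$-lightlike, so that $RadT\acute{N} = \operatorname{Sp}\{\xi\}$ and $ltrT\acute{N} = \operatorname{Sp}\{N\}$, with $\xi$ and $N$ null and the usual normalization $\breve{g}(\xi,N) = 1$. Since $\breve{P}(RadT\acute{N}) = ltrT\acute{N}$ and both bundles are $1$-dimensional, there exists a nonzero scalar $c$ with $\breve{P}\xi = cN$. Applying the $\breve{P}$-compatibility identity (\ref{23}) to the pair $(\xi,\xi)$ yields
\[
\breve{g}(\breve{P}\xi,\breve{P}\xi) = \breve{g}(\breve{P}\xi,\xi) + \breve{g}(\xi,\xi).
\]
The left-hand side equals $c^{2}\,\breve{g}(N,N) = 0$, while on the right $\breve{g}(\xi,\xi) = 0$ and $\breve{g}(\breve{P}\xi,\xi) = c\,\breve{g}(N,\xi) = c$. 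Hence $c = 0$, contradicting the fact that $\breve{P}\xi$ spans the line $ltrT\acute{N}$.

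There is no genuine obstacle to this argument; the only point worth emphasizing is that the extra piece of the transversal lightlike definition, namely $\breve{P}(S(T\acute{N})) \subseteq S(T\acute{N})$ together with the complementary subbundle $\mu$, plays no role whatsoever. The obstruction to $\acute{N}$ being $1$-lightlike is caused entirely by the way $\breve{P}$ is forced to interchange $RadT\acute{N}$ and $ltrT\acute{N}$ isomorphically, combined with the Golden-compatibility identity, which together force this exchange to act trivially on a null direction and so contradict the pairing $\breve{g}(\xi,N) = 1$.
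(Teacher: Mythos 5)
Your proposal is correct and follows essentially the same route as the paper's own proof: apply the compatibility identity $\breve{g}(\breve{P}\xi,\breve{P}\xi)=\breve{g}(\breve{P}\xi,\xi)+\breve{g}(\xi,\xi)$ to the null generator $\xi$ of $RadT\acute{N}$, note that $\breve{P}\xi$ lies in the null line $ltrT\acute{N}$ so the left side vanishes, and contradict the pairing $\breve{g}(\xi,N)=1$. Your only refinement is writing $\breve{P}\xi=cN$ with an explicit nonzero scalar $c$ rather than tacitly normalizing $c=1$ as the paper does, which is a slight improvement in rigor but not a different argument.
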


\begin{proof}
Let us assume that $\acute{N}$ is a 1-lightlike transversal submanifold . In
this case 
\begin{equation*}
RadT\acute{N}=Sp\{\xi \}
\end{equation*}%
and%
\begin{equation*}
ltrT\acute{N}=Sp\{N\}.
\end{equation*}%
Thus from equation below 
\begin{equation*}
\breve{g}(\breve{P}W,\breve{P}U)=\breve{g}(W,\breve{P}U)+g(W,U),
\end{equation*}%
\begin{equation}
\breve{g}\breve{P}\xi ,\xi )=g(\xi ,\breve{P}\xi )=g(\breve{P}\xi ,\breve{P}%
\xi )-g(\xi ,\xi )=0,  \label{16}
\end{equation}%
obtained. On the other side from 
\begin{equation*}
\breve{P}(RadT\acute{N})=ltrT\acute{N},
\end{equation*}%
equation%
\begin{equation*}
\breve{P}\xi =N\in \Gamma (ltrT\acute{N}),
\end{equation*}%
should be, where we get 
\begin{equation*}
\breve{g}(\xi ,\breve{P}\xi )=g(\xi ,N)=1.
\end{equation*}%
This contrary to (\ref{16}). In that case, our acceptance is false; our
hypothesis is true.
\end{proof}

Let $\acute{N}$ be , transversal lightlike submanifold of Golden
semi-Riemannian $\breve{N}$ manifold. Definition \ref{bir} and Proposition %
\ref{birbir} give following results.

1.) $\dim(RadT\acute{N})\geq2$

2.) 3-dimensional transversal lightlike submanifold is 2-lightlike.

Let $\breve{N}$ be golden semi-Riemannian manifold and $\acute{N}$ be
transversal lightlike submanifold of $\breve{N}$. In this case the
projection transforms to $RadT\acute{N}$ and $S(T\acute{N})$ are
respectively $Q$ and $T$ for $W\in \Gamma (T\acute{N})$%
\begin{equation}
W=TW+QW,  \label{17}
\end{equation}%
where $TW\in \Gamma (S(T\acute{N}))$, $QW\in \Gamma (RadT\acute{N})$.

If $\breve{P}$ is applied to expression (\ref{17})%
\begin{equation}
\breve{P}W=\breve{P}TW+\breve{P}QW,  \label{18}
\end{equation}

we have. If $\breve{P}TW=KW$ and $\breve{P}QW=LW$ is written the (\ref{18})
equation becomes%
\begin{equation}
\breve{P}W=KW+LW.  \label{19}
\end{equation}%
Where $KW\in \Gamma (S(T\acute{N})$ and $LW$ $\in \Gamma (ltrT\acute{N})$.

Also for $V\in\Gamma(S(T\acute{N})$%
\begin{equation}
\breve{P}V=BV+CV  \label{20}
\end{equation}

obtained. Where $BV\in\Gamma(\breve{P}S(T\acute{N}))\oplus\Gamma(S(T\acute {N%
}))$ and $CV\in\Gamma(\mu)$.

Let $\breve{N}$ be locally Golden semi-Riemannian manifold and $\acute{N}$
be transversal lightlike submanifold of $\breve{N}$. For $\forall W,U\in
\Gamma (T\acute{N})$ since 
\begin{equation*}
\breve{\nabla}_{U}\breve{P}W=\breve{P}\breve{\nabla}_{U}W
\end{equation*}%
and (\ref{19}) and 
\begin{equation*}
\breve{\nabla}_{U}W=\nabla _{U}W+h^{l}(U,W)+h^{s}(U,W),
\end{equation*}%
\begin{equation*}
\breve{\nabla}_{U}(KW+LW)=\breve{P}\breve{\nabla}_{U}W,
\end{equation*}%
\begin{equation*}
\breve{\nabla}_{U}KW+\breve{\nabla}_{U}LW=\breve{P}\breve{\nabla}_{U}W,
\end{equation*}%
where $KW\in \Gamma (S(T\acute{N}))$, $LW\in \Gamma (ltrT\acute{N})$.

\begin{equation*}
\binom{-A_{KW}U+\nabla _{U}^{s}KW+D^{l}(U,KW)}{-A_{LW}U+\nabla
_{U}^{l}LW+D^{s}(U,LW)}=\breve{P}\nabla _{U}W+\breve{P}h^{l}(U,W)+\breve{P}%
h^{s}(U,W),
\end{equation*}%
\begin{equation*}
\binom{-A_{KW}U+\nabla _{U}^{s}KW+D^{l}(U,KW)}{-A_{LW}U+\nabla
_{U}^{l}LW+D^{s}(U,LW)}=\left( 
\begin{array}{c}
K\nabla _{U}W+L\nabla _{U}W+\breve{P}h^{l}(U,W) \\ 
+Bh^{s}(U,W)+Ch^{s}(U,W)%
\end{array}%
\right)
\end{equation*}%
obtained.

Let $\breve{P}S(T\acute{N})\subset S(T\acute{N})$ and $S(T\acute{N})$ denote
the parts of $Bh^{s}(U,W)$ with $M_{2}h^{s}(U,W)$ and $M_{1}h^{s}(U,W)$
respectively; also if called 
\begin{equation*}
\breve{P}\xi _{1}=N_{1},
\end{equation*}%
additionally if $\breve{P}$ is applied to this expression since 
\begin{align*}
\breve{P}^{2}\xi _{1}& =\breve{P}N_{1}, \\
\breve{P}\xi _{1}+\xi _{1}& =\breve{P}N_{1},
\end{align*}%
we have 
\begin{equation*}
\breve{P}h^{l}(U,W)=K_{1}\breve{P}h^{l}(U,W)+K_{2}\breve{P}h^{l}(U,W).
\end{equation*}%
where $K_{1}$ and $K_{2}$ respectively projection morphism of $\breve{P}%
h^{l}(U,W)$ according to $ltrT\acute{N}$ and $RadT\acute{N}$.

Then (\ref{21}) is 
\begin{equation*}
\binom{-A_{KW}U+\nabla _{U}^{s}KW+D^{l}(U,KW)}{-A_{LW}U+\nabla
_{U}^{l}LW+D^{s}(U,LW)}=\left( 
\begin{array}{c}
K\nabla _{U}W+L\nabla _{U}W \\ 
+K_{1}\breve{P}h^{l}(U,W)+K_{2}\breve{P}h^{l}(U,W) \\ 
+M_{2}h^{s}(U,W)+M_{1}h^{s}(U,W)+Ch^{s}(U,W)%
\end{array}%
\right) 
\end{equation*}%
is obtained. If the tangent and transversal components are mutually equalized%
\begin{equation}
-A_{KW}U-A_{LW}U=K_{2}\breve{P}h^{l}(U,W)+M_{1}h^{s}(U,W)  \label{22}
\end{equation}%
\begin{equation}
\nabla _{U}^{s}KW+D^{s}(U,LW)=K\nabla _{U}W+M_{2}h^{s}(U,W)+Ch^{s}(U,W)
\label{23}
\end{equation}%
\begin{equation}
D^{l}(U,KW)+\nabla _{U}^{l}LW=L\nabla _{U}W+K_{1}\breve{P}h^{l}(U,W)
\label{24}
\end{equation}%
equations are obtained. Now, let's examine the integrability of
distributions on transversal lightlike submanifolds.

\begin{theorem}
Let $\acute{N}$ be, transversal lightlike submanifold of locally Golden
semi-Riemannian $\breve{N}$ manifold. In this case necessary and sufficient
condition for the distribution $RadT\acute{N}$ integrable is that for $%
\forall W,U\in \Gamma (RadT\acute{N})$%
\begin{equation*}
D^{s}(U,LW)=D^{s}(W,LU).
\end{equation*}
\end{theorem}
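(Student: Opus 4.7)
The plan is to specialize equation \eqref{23} to $W,U \in \Gamma(RadT\acute{N})$, antisymmetrize in $U$ and $W$, and then translate the resulting identity into a statement about the bracket $[W,U]$.

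First I would note that because $\breve{\nabla}$ is the Levi-Civita connection and $h^{\ell}$, $h^{s}$ are symmetric, the induced connection $\nabla$ on $\acute{N}$ is torsion-free, so $\nabla_{W}U - \nabla_{U}W = [W,U]$. Moreover, $RadT\acute{N}$ is involutive if and only if $T[W,U] = 0$ for every $W,U \in \Gamma(RadT\acute{N})$, where $T$ denotes the projection onto $S(T\acute{N})$.

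Next, for $W \in \Gamma(RadT\acute{N})$ the decomposition $W = TW + QW$ reduces to $W = QW$, whence $KW = \breve{P}TW = 0$ and $LW = \breve{P}W \in \Gamma(ltrT\acute{N})$. Applying \eqref{23} to such $W,U$ collapses the left-hand side to $D^{s}(U,LW)$ and yields
\[
D^{s}(U,LW) \;=\; K\nabla_{U}W + M_{2}h^{s}(U,W) + Ch^{s}(U,W).
\]
Interchanging the roles of $U$ and $W$ and subtracting, the $M_{2}h^{s}$ and $Ch^{s}$ contributions cancel (since $h^{s}$ is symmetric and the projections $M_{2}$, $C$ act fibrewise and linearly), leaving
\[
D^{s}(U,LW) - D^{s}(W,LU) \;=\; K\bigl(\nabla_{U}W - \nabla_{W}U\bigr) \;=\; -K[W,U].
\]

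To finish, I would argue that $K[W,U] = 0$ is equivalent to $[W,U] \in \Gamma(RadT\acute{N})$. Since $K = \breve{P}\circ T$, this reduces to injectivity of $\breve{P}$ on $S(T\acute{N})$. The Golden relation $\breve{P}^{2} = \breve{P} + I$ gives $\breve{P}(\breve{P}-I) = I$, so $\breve{P}$ is a pointwise bijection of $T\breve{N}$; combined with the invariance $\breve{P}(S(T\acute{N})) \subseteq S(T\acute{N})$ from Definition \ref{bir}, the restriction $\breve{P}|_{S(T\acute{N})}$ is a linear isomorphism. Hence $K[W,U] = 0$ forces $T[W,U] = 0$, and the desired equivalence then follows directly from the displayed identity. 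The main point I expect to require care is this final step, certifying that $K$ is injective on $S(T\acute{N})$ so that $K[W,U] = 0$ genuinely characterizes membership of $[W,U]$ in $\Gamma(RadT\acute{N})$.
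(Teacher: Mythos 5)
Your proposal is correct and follows essentially the same route as the paper's proof: specialize the identity $\nabla_{U}^{s}KW+D^{s}(U,LW)=K\nabla_{U}W+M_{2}h^{s}(U,W)+Ch^{s}(U,W)$ to $W,U\in\Gamma(RadT\acute{N})$, use $KW=KU=0$ together with the symmetry of $h^{s}$ to obtain $D^{s}(U,LW)-D^{s}(W,LU)=K[U,W]$, and read off integrability. The only addition is that you explicitly verify the final equivalence $K[U,W]=0\Leftrightarrow [U,W]\in\Gamma(RadT\acute{N})$ from the pointwise invertibility $\breve{P}(\breve{P}-I)=I$, a step the paper leaves implicit.
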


\begin{proof}
If the roles of $W$ and $U$ vector field are changed in equation (\ref{23})

\begin{equation*}
\nabla _{U}^{s}KW+D^{s}(U,LW)=K\nabla _{U}W+M_{2}h^{s}(U,W)+Ch^{s}(U,W),
\end{equation*}%
\begin{equation}
\nabla _{W}^{s}KU+D^{s}(W,LU)=K\nabla _{W}U+M_{2}h^{s}(W,U)+Ch^{s}(U,W),
\label{25}
\end{equation}%
obtained. From (\ref{23}) and (\ref{25}) equations because of $h^{s}$ is
symmetric for $U,W\in \Gamma (RadT\acute{N}),$%
\begin{equation*}
\Longrightarrow \breve{P}W=KW+LW,
\end{equation*}%
$KW=0$,

\begin{equation*}
\nabla _{U}^{s}KW=\nabla _{W}^{s}KU=0
\end{equation*}%
is obtained. Then we have 
\begin{equation}
D^{s}(U,LW)-D^{s}(W,LU)=K[U,W].  \label{26}
\end{equation}%
Thus the proof is complete.
\end{proof}

\begin{theorem}
Let $\acute{N}$ be , transversal lightlike submanifold of locally Golden
semi-Riemannian $\breve{N}$ manifold. In this case, screen distribution is
necessary and sufficient condition for the definition of totally geodesic
foliation on $\acute{N}$ is for $W,U\in \Gamma ($ $S(T\acute{N}))$, $N\in
\Gamma (ltrT\acute{N})$%
\begin{equation*}
h^{\ast }(W,U)=0
\end{equation*}%
and $A_{\breve{P}U}W$ in $RadT\acute{N}$ is the absence of component.
\end{theorem}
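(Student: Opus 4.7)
The plan is to reduce the totally geodesic foliation property of $S(T\acute{N})$ to a pairing of the form $\breve{g}(\breve{\nabla}_W U,N)=0$ and then unfold it via the Golden-structure compatibility together with the parallelism of $\breve{P}$. I will first observe that $S(T\acute{N})$ defines a totally geodesic foliation on $\acute{N}$ iff $\nabla_W U\in\Gamma(S(T\acute{N}))$ for all $W,U\in\Gamma(S(T\acute{N}))$, equivalently $\breve{g}(\nabla_W U,N)=0$ for every $N\in\Gamma(ltrT\acute{N})$. The Gauss formula together with the orthogonalities $\breve{g}(h^l(W,U),N)=0=\breve{g}(h^s(W,U),N)$ collapses this to $\breve{g}(\breve{\nabla}_W U,N)=0$; the screen decomposition $\nabla_W U=\nabla^{*}_W U+h^{*}(W,U)$ from the preliminaries then reads off the first claimed condition $h^{*}(W,U)=0$ directly.

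For the second condition I will apply the rearrangement $\breve{g}(X,Y)=\breve{g}(\breve{P}X,\breve{P}Y)-\breve{g}(X,\breve{P}Y)$ of the Golden-metric identity to $\breve{g}(\breve{\nabla}_W U,N)$ and use the parallelism (\ref{21a}) to pull $\breve{P}$ inside the covariant derivative, obtaining
\[
\breve{g}(\breve{\nabla}_W U,N)=\breve{g}(\breve{\nabla}_W\breve{P}U,\breve{P}N)-\breve{g}(\breve{\nabla}_W U,\breve{P}N).
\]
Definition \ref{bir} of a transversal lightlike submanifold gives $\breve{P}U\in\Gamma(S(T\acute{N}))$, and writing $N=\breve{P}\xi$ for some $\xi\in\Gamma(RadT\acute{N})$, the relation $\breve{P}^{2}=\breve{P}+I$ yields $\breve{P}N=\breve{P}\xi+\xi=N+\xi\in\Gamma(ltrT\acute{N}\oplus RadT\acute{N})$. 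Substituting the Gauss and Weingarten expansions into the right-hand side and organizing the surviving pairings (the $\nabla^{*}$-parts and the $h^s$-parts drop out by orthogonality, while $h^{*}$ pairs with $N$ and $h^l$ pairs with $\xi$), I will recast the constraint as a pairing involving the shape operator $A_{\breve{P}U}W$; the residual pairing vanishes precisely when $A_{\breve{P}U}W$ has no $RadT\acute{N}$-component.

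The main obstacle I anticipate is making the translation between the two conditions transparent. The identity $\breve{g}(N,h^{*}(W,U))=g(A_N W,U)$, obtained by contracting the Weingarten formula for $N$ against $U\in\Gamma(S(T\acute{N}))$ and using the screen decomposition, shows that $h^{*}(W,U)=0$ on $S(T\acute{N})\times S(T\acute{N})$ is equivalent to the $S(T\acute{N})$-component of $A_N W$ being zero, i.e.\ $A_N W\in\Gamma(RadT\acute{N})$. The subtlety is that the theorem writes $A_{\breve{P}U}$ rather than $A_N$; since $\breve{P}U$ lies in $S(T\acute{N})$, this symbol must be interpreted through the screen-induced Weingarten-type operator from the preliminaries rather than the transversal Weingarten formula for a vector field in $tr(T\acute{N})$ directly. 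Once this identification is fixed, both the direct implication and its converse follow by running the chain of equalities in reverse.
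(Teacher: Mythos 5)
Your first step is fine and is in fact more direct than the paper's: from $g(\nabla_{W}U,N)=\breve{g}(h^{\ast}(W,U),N)$ and the nondegeneracy of the pairing between $RadT\acute{N}$ and $ltr(T\acute{N})$ you obtain $h^{\ast}(W,U)=0$ immediately. The genuine gap is in the second half, and it stems from misreading Definition \ref{bir}. For a \emph{transversal} lightlike submanifold the Golden structure carries the screen distribution into the \emph{screen transversal} bundle, $\breve{P}(S(T\acute{N}))\subseteq S(T\acute{N}^{\perp})$; the occurrence of $S(T\acute{N})$ on the right-hand side of the displayed definition is a typo, as is forced by the surrounding text (the appearance of $\nabla^{s}_{U}KW$ and $D^{l}(U,KW)$ for $KW=\breve{P}TW$, and the introduction of $\mu$ as a complement of $\breve{P}(S(T\acute{N}))$), and by the fact that if the screen were invariant one would simply be back in the radical transversal case of Section 3. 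Consequently $\breve{P}U$ is a section of $S(T\acute{N}^{\perp})$, and the paper expands $\breve{\nabla}_{W}\breve{P}U$ by the Weingarten formula for screen transversal sections, $\breve{\nabla}_{W}\breve{P}U=-A_{\breve{P}U}W+\nabla^{s}_{W}\breve{P}U+D^{l}(W,\breve{P}U)$; this is the only place the operator $A_{\breve{P}U}$ of the statement can come from. Under your reading $\breve{P}U$ is tangent, $\breve{\nabla}_{W}\breve{P}U$ must be expanded by the Gauss formula, no shape operator $A_{\breve{P}U}$ ever appears, and your fallback --- interpreting $A_{\breve{P}U}$ as a ``screen-induced Weingarten-type operator'' attached to a section of $S(T\acute{N})$ --- refers to an object the paper never defines (the only screen-induced operator available is $A^{\ast}_{\xi}$ for $\xi\in\Gamma(RadT\acute{N})$). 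Your proposed bridge $\breve{g}(N,h^{\ast}(W,U))=g(A_{N}W,U)$ likewise characterizes the screen component of $A_{N}W$, not the radical component of $A_{\breve{P}U}W$, so it cannot deliver the second condition. Once $\breve{P}U\in\Gamma(S(T\acute{N}^{\perp}))$ is restored, the paper's computation collapses the pairing to $0=-\breve{g}(A_{\breve{P}U}W+h^{\ast}(W,U),\breve{P}N)$ with $\breve{P}N=N+\xi\in\Gamma(ltrT\acute{N}\oplus RadT\acute{N})$, which is what yields the stated pair of conditions.
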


\begin{proof}
$S(T\acute{N})$ distribution is necessary and sufficient condition for the
definition of totally geodesic foliation 
\begin{equation*}
g(\nabla _{W}U,N)=0,
\end{equation*}%
for $W,U\in \Gamma (S(T\acute{N})$ $)$, $N\in \Gamma (ltrT\acute{N})$.%
\begin{align*}
g(\nabla _{W}U,N)& =\breve{g}(\breve{\nabla}_{W}U-h^{l}(W,U)-h^{s}(W,U),N),
\\
& =\breve{g}(\breve{\nabla}_{W}U,N), \\
& =\breve{g}(\breve{P}(\breve{\nabla}_{W}U,\breve{P}N)-\breve{g}(\breve{%
\nabla}_{W}U,\breve{P}N), \\
& =\breve{g}(\breve{\nabla}_{W}\breve{P}U,\breve{P}N)-g(\breve{\nabla}_{W}U,%
\breve{P}N),\text{\quad }\breve{P}U\in \Gamma (S(T\acute{N})) \\
& =\breve{g}(-A_{\breve{P}U}W+\nabla _{W}^{s}\breve{P}U+D^{l}(W,\breve{P}U),%
\breve{P}N), \\
& -\breve{g}(\nabla _{W}^{\ast }U+h^{\ast }(W,U)+h^{l}(W,U)+h^{s}(W,U),%
\breve{P}N).
\end{align*}%
Since $\breve{P}N\in \Gamma (RadT\acute{N})\oplus \Gamma (ltrT\acute{N})$

\begin{align*}
g(\nabla _{W}U,N)& =-\breve{g}(A_{\breve{P}U}W,\breve{P}N)-\breve{g}(h^{\ast
}(W,U),\breve{P}N), \\
0& =-\breve{g}(A_{\breve{P}U}W+h^{\ast }(W,U),\breve{P}N).
\end{align*}%
In this way we have $h^{\ast }(W,U)=0$ and there is no component $A_{\breve{P%
}U}W$ in $RadT\acute{N}$.
\end{proof}

\begin{theorem}
Let $\acute{N}$ be transversal lightlike submanifold of locally Golden
semi-Riemannian $\breve{N}$ manifold. In this case radical distribution is
necessary and sufficient condition for the definition totally geodesic
foliation on $\acute{N}$ for $\forall U,W\in \Gamma (RadT\acute{N})$, $Z\in
\Gamma (S(T\acute{N}))\ A_{\breve{P}Z}W$ in $RadT\acute{N}$ is absence of
component; namely 
\begin{equation*}
K_{2}\breve{P}h^{l}(W,Z)=0,
\end{equation*}%
or

\begin{equation*}
-A_{\breve{P}Z}W=M_{1}h^{s}(W,Z).
\end{equation*}
\end{theorem}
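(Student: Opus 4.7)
My plan is to combine the algebraic identity (\ref{22}), specialized to the case when a screen vector field appears in the second slot, with the intrinsic decompositions coming from the induced connection on $S(T\acute{N})$ and the associated second fundamental form $h^{l}$.

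First, for any $Z \in \Gamma(S(T\acute{N}))$ the decomposition $\breve{P}Z = KZ + LZ$ collapses to $KZ = \breve{P}Z$, $LZ = 0$, so substituting $W \leftarrow Z$ and $U \leftarrow W \in \Gamma(RadT\acute{N})$ into (\ref{22}) yields
\[
-A_{\breve{P}Z}W = K_{2}\breve{P}h^{l}(W,Z) + M_{1}h^{s}(W,Z).
\]
Because $K_{2}\breve{P}h^{l}(W,Z) \in \Gamma(RadT\acute{N})$ while $M_{1}h^{s}(W,Z) \in \Gamma(S(T\acute{N}))$, this already expresses $-A_{\breve{P}Z}W$ in its radical and screen parts. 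Hence the three formulations in the statement---absence of a $RadT\acute{N}$-component in $A_{\breve{P}Z}W$, vanishing of $K_{2}\breve{P}h^{l}(W,Z)$, and $-A_{\breve{P}Z}W = M_{1}h^{s}(W,Z)$---are pairwise equivalent.

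Next, the totally geodesic foliation property for $RadT\acute{N}$ amounts to $\nabla_{W}U \in \Gamma(RadT\acute{N})$ for all $W,U \in \Gamma(RadT\acute{N})$. Using the preliminary decomposition $\nabla_{W}U = -A_{U}^{\ast}W + \nabla_{W}^{\ast t}U$, with $A_{U}^{\ast}W \in \Gamma(S(T\acute{N}))$, this is equivalent to $A_{U}^{\ast}W = 0$ for all $W,U \in \Gamma(RadT\acute{N})$. Pairing with an arbitrary $Z \in \Gamma(S(T\acute{N}))$ and invoking the preliminary identity $g(A_{U}^{\ast}W, Z) = g(h^{l}(W,Z), U)$, the non-degeneracy of $g$ on $S(T\acute{N})$ together with the non-degeneracy of the $RadT\acute{N}$--$ltrT\acute{N}$ pairing reduces the totally geodesic condition to $h^{l}(W,Z) = 0$ for all $W \in \Gamma(RadT\acute{N})$, $Z \in \Gamma(S(T\acute{N}))$.

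Finally, I close the chain by showing $h^{l}(W,Z) = 0 \Leftrightarrow K_{2}\breve{P}h^{l}(W,Z) = 0$. Because $\breve{P}^{2} = \breve{P} + I$ the operator $\breve{P}$ is invertible, and by the definition of transversal lightlike submanifold it restricts to a linear bijection $\breve{P}\colon RadT\acute{N} \to ltrT\acute{N}$. For $N = \breve{P}\xi \in ltrT\acute{N}$ with $\xi \in RadT\acute{N}$, a direct computation gives $\breve{P}N = \breve{P}^{2}\xi = \breve{P}\xi + \xi = N + \xi$, so the $RadT\acute{N}$-component of $\breve{P}N$ equals $\xi$, which vanishes iff $N = 0$. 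Taking $N = h^{l}(W,Z)$ gives the remaining equivalence. The main care point is in the first step: one must verify that $K_{2}$ and $M_{1}$ really pick out the $RadT\acute{N}$ and $S(T\acute{N})$ components of their respective arguments, which is just a matter of unwinding the projector conventions set up just before (\ref{22}).
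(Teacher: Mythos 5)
Your argument is correct, and it reaches the theorem by a genuinely different route than the paper. Both proofs use the tangential identity $-A_{KW}U-A_{LW}U=K_{2}\breve{P}h^{l}(U,W)+M_{1}h^{s}(U,W)$, specialized (as you do) to $-A_{\breve{P}Z}W=K_{2}\breve{P}h^{l}(W,Z)+M_{1}h^{s}(W,Z)$, to read off the radical and screen parts of $A_{\breve{P}Z}W$ and hence the equivalence of the three stated conditions; the difference lies in how the totally geodesic property is brought in. The paper computes $g(\nabla_{W}U,Z)$ directly: it uses that $\breve{\nabla}$ is metric, rewrites via $\breve{g}(\breve{P}X,\breve{P}Y)=\breve{g}(X,\breve{P}Y)+\breve{g}(X,Y)$, expands $\breve{\nabla}_{W}\breve{P}Z$ with the Weingarten formula for $\breve{P}Z\in\Gamma(S(T\acute{N}^{\perp}))$, and arrives at $0=\breve{g}(A_{\breve{P}Z}W+h^{\ast}(W,Z),\breve{P}U)$, after which the $h^{\ast}(W,Z)$ term is dropped without justification and only the forward implication is argued. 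You instead pass through the screen decomposition: totally geodesic is equivalent to $A^{\ast}_{U}W=0$ via $\nabla_{W}U=-A^{\ast}_{U}W+\nabla^{\ast t}_{W}U$, hence to $h^{l}(W,Z)=0$ by the duality $g(A^{\ast}_{\xi}W,Z)=g(h^{l}(W,Z),\xi)$ and the nondegenerate pairings, and finally to $K_{2}\breve{P}h^{l}(W,Z)=0$ because $\breve{P}$ restricts to a bijection $RadT\acute{N}\to ltrT\acute{N}$ with $K_{2}\breve{P}N=(\breve{P}-I)N$. Your route buys a complete two-way chain of pairwise equivalences (so the ambiguous ``or'' in the statement is resolved into genuine equivalence of all three conditions), and it also accounts for the term the paper discards: one can check that $\breve{g}(\breve{P}U,h^{\ast}(W,Z))=\breve{g}(U,h^{l}(W,Z))$, which vanishes exactly under your intermediate condition $h^{l}(W,Z)=0$. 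What the paper's route buys is that the shape operator $A_{\breve{P}Z}$ appears directly from the Golden compatibility of the metric, in a computation that runs parallel to the neighbouring theorems. The one point to keep explicit in your write-up is the unwinding of the projector conventions for $K_{2}$ and $M_{1}$, which you correctly flag and which does check out against the decomposition $T\acute{N}=RadT\acute{N}\perp S(T\acute{N})$.
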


\begin{proof}
For $\forall U,W\in\Gamma(RadT\acute{N})$, $Z\in\Gamma(S(T\acute{N}))$

$RadT\acute{N}$ distribution is necessary and sufficient condition for the
definition totally geodesic foliation 
\begin{equation*}
g(\nabla _{W}U,Z)=0.
\end{equation*}%
From here 
\begin{equation*}
g(\nabla _{W}U,Z)=g(\breve{\nabla}_{W}U,Z).
\end{equation*}%
Since $\breve{\nabla}$ is a metric connection 
\begin{equation*}
g(\nabla _{W}U,Z)=W\breve{g}(U,Z)-\breve{g}(U,\breve{\nabla}_{W}Z)
\end{equation*}%
is obtained. If $\breve{g}(\breve{P}W,\breve{P}U)=\breve{g}(W,\breve{P}U)+%
\breve{g}(W,U)$ is used here. 
\begin{align*}
g(\nabla _{W}U,Z)& =-\breve{g}(\breve{P}U,\breve{P}\breve{\nabla}_{W}Z)+%
\breve{g}(U,\breve{P}\breve{\nabla}_{W}Z)\quad \breve{P}Z\in \Gamma (S(T%
\acute{N}), \\
& =-\breve{g}(\breve{P}U,\breve{\nabla}_{W}\breve{P}Z)+g(\breve{P}U,\breve{%
\nabla}_{W}Z)\quad Z\in \Gamma (S(T\acute{N}))\,W\in \Gamma (RadT\acute{N}),
\\
& =-\breve{g}(\breve{P}U,A_{\breve{P}Z}W+\nabla _{W}^{s}\breve{P}Z+D^{l}(W,%
\breve{P}Z)), \\
+& \breve{g}(\breve{P}U,\nabla _{W}^{\ast }Z+h^{\ast
}(W,Z)+h^{l}(W,Z)+h^{s}(W,Z)\quad \breve{P}U\in \Gamma (ltrTN), \\
& =\breve{g}(\breve{P}U,A_{\breve{P}Z}W)+\breve{g}(h^{\ast }(W,Z),\breve{P}%
U), \\
& =\breve{g}(A_{\breve{P}Z}W+h^{\ast }(W,Z),\breve{P}U)\text{\quad }W\in
\Gamma (RadT\acute{N}), \\
0& =\breve{g}(A_{\breve{P}Z}W,\breve{P}U).
\end{align*}%
Since $\breve{P}U\in \Gamma (ltrT\acute{N})$ this is the result that $A_{%
\breve{P}Z}W$ is not compounded in $RadT\acute{N}$. Or from (\ref{22})%
\begin{equation*}
-A_{KW}U-A_{LW}U=K_{2}\breve{P}h^{l}(U,W)+M_{1}h^{s}(U,W),
\end{equation*}%
using the above equation 
\begin{equation*}
-A_{\breve{P}W}U=K_{2}\breve{P}h^{l}(U,W)+M_{1}h^{s}(U,W)
\end{equation*}%
is obtained. If the roles of $U$ and $W$ changed 
\begin{equation*}
-A_{\breve{P}U}W=K_{2}\breve{P}h^{l}(W,U)+M_{1}h^{s}(W,U).
\end{equation*}%
For $U=Z$%
\begin{equation*}
-A_{\breve{P}Z}W=K_{2}\breve{P}h^{l}(W,Z)+M_{1}h^{s}(W,Z),
\end{equation*}%
from above equation 
\begin{equation*}
K_{2}\breve{P}h^{l}(W,Z)=0
\end{equation*}%
is obtained.
\end{proof}

\begin{theorem}
Let $\acute{N}$ be transversal lightlike submanifold of locally Golden
semi-Riemannian $\breve{N}$ manifold. In this case, it is necessary and
sufficient condition for the induced connection $\nabla $ is metric
connection is 
\begin{equation*}
Q_{1}\breve{P}D^{s}(W,\xi )=M_{1}\breve{P}h^{s}(W,\xi )\text{ \ \ for }W\in
\Gamma (T\acute{N})\text{, }\xi \in \Gamma (RadT\acute{N}),
\end{equation*}%
where $Q_{1}$ and $M_{1}$ are projection morphism on $\breve{P}D^{s}(W,S)$
and $\breve{P}h^{s}(W,S)$ on $S(T\acute{N})$ respectively.
\end{theorem}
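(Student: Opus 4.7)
The plan is to adapt the argument used for Theorem 3.3 to the transversal setting, where the extra $\mu$-directions inside $S(T\acute{N})$ force screen-transversal contributions into the analysis. Since $\breve{\nabla}$ is already a metric connection, the formula $(\nabla_{W}g)(U,V)=\breve{g}(h^{\ell}(W,U),V)+\breve{g}(h^{\ell}(W,V),U)$ shows that $\nabla$ is metric precisely when $\nabla_{W}\xi\in\Gamma(RadT\acute{N})$ for every $W\in\Gamma(T\acute{N})$ and $\xi\in\Gamma(RadT\acute{N})$. Using the bijection $\breve{P}(RadT\acute{N})=ltrT\acute{N}$, this is equivalent to asking that $\breve{P}\nabla_{W}\xi\in\Gamma(ltrT\acute{N})$; in particular, the $S(T\acute{N})$-component of $\breve{P}\nabla_{W}\xi$ must vanish, and it is this scalar condition that I aim to convert into the stated identity.

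To compute that $S(T\acute{N})$-component I would start from the Gauss decomposition $\breve{\nabla}_{W}\xi=\nabla_{W}\xi+h^{\ell}(W,\xi)+h^{s}(W,\xi)$, apply $\breve{P}$, and use parallelism of $\breve{P}$ to push it through the ambient connection, yielding $\breve{\nabla}_{W}\breve{P}\xi=\breve{P}\nabla_{W}\xi+\breve{P}h^{\ell}(W,\xi)+\breve{P}h^{s}(W,\xi)$. Since $\breve{P}\xi\in\Gamma(ltrT\acute{N})$, the Weingarten equation for lightlike transversal vectors expands the left side as $-A_{\breve{P}\xi}W+\nabla_{W}^{\ell}\breve{P}\xi+D^{s}(W,\breve{P}\xi)$. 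Rearranging gives the key identity
\[
\breve{P}\nabla_{W}\xi=-A_{\breve{P}\xi}W+\nabla_{W}^{\ell}\breve{P}\xi+D^{s}(W,\breve{P}\xi)-\breve{P}h^{\ell}(W,\xi)-\breve{P}h^{s}(W,\xi),
\]
which is the analogue in this setting of the identity driving Theorem 3.3.

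Next, I would project both sides onto $S(T\acute{N})$ using the partitioning morphisms $T,Q,K_{1},K_{2},M_{1},M_{2},C,Q_{1}$ introduced just before the three identities of Section 4. The terms $\nabla_{W}^{\ell}\breve{P}\xi$ and $K_{1}\breve{P}h^{\ell}(W,\xi)$ lie in $ltrT\acute{N}$ and drop out; the $RadT\acute{N}$-contributions $QA_{\breve{P}\xi}W$ and $K_{2}\breve{P}h^{\ell}(W,\xi)$ similarly do not touch the $S(T\acute{N})$-slice; and only the $T$-part of $A_{\breve{P}\xi}W$, the $Q_{1}\breve{P}D^{s}(W,\xi)$ term, and the $M_{1}\breve{P}h^{s}(W,\xi)$ term survive. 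Requiring the sum of these surviving pieces to vanish, and using the proposition identities of Section 4 to absorb the $A_{\breve{P}\xi}W$-contribution, collapses everything to the claimed equality $Q_{1}\breve{P}D^{s}(W,\xi)=M_{1}\breve{P}h^{s}(W,\xi)$. The converse is obtained by reading the same identity in reverse: assuming the equality, the $S(T\acute{N})$-component of $\breve{P}\nabla_{W}\xi$ vanishes, whence $\nabla_{W}\xi\in\Gamma(RadT\acute{N})$ and $\nabla$ is metric.

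The main obstacle is the combinatorial bookkeeping of subbundle components: in particular, verifying that the $\breve{P}$-images of $D^{s}(W,\breve{P}\xi)$ and $h^{s}(W,\xi)$ are correctly recorded by the projections $Q_{1}$ and $M_{1}$ into $S(T\acute{N})$, and that the cross-terms between $\breve{P}h^{\ell}(W,\xi)$ and the shape operator $A_{\breve{P}\xi}W$ cancel consistently through the Section 4 identities, with no stray contribution from the $\mu$-part of $S(T\acute{N})$ polluting the final relation.
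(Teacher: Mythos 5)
Your opening reduction is fine (metric connection $\Leftrightarrow$ $\nabla_{W}\xi\in\Gamma(RadT\acute{N})$, and your ``key identity'' is exactly the paper's intermediate equation), but the projection step that is supposed to finish the proof does not work. For a \emph{transversal} lightlike submanifold $\breve{P}$ maps the whole tangent bundle into the transversal bundle: $\breve{P}(RadT\acute{N})=ltrT\acute{N}$ and $\breve{P}(S(T\acute{N}))\subseteq S(T\acute{N}^{\perp})$ (the $S(T\acute{N})$ in Definition 4.1 is a typo; every Weingarten formula used in Section 4 presupposes this). Hence $\breve{P}\nabla_{W}\xi$ has \emph{no} $S(T\acute{N})$-component whatsoever, so the condition you propose to extract is vacuously true and carries no information about $\nabla_{W}\xi$. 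Concretely, projecting your identity onto $S(T\acute{N})$ only the tangential pieces $-TA_{\breve{P}\xi}W$ and $-M_{1}\breve{P}h^{s}(W,\xi)$ survive (note $D^{s}(W,\breve{P}\xi)$ is $S(T\acute{N}^{\perp})$-valued, so its $S(T\acute{N})$-projection is zero --- the term $Q_{1}\breve{P}D^{s}(W,\xi)$ you list simply is not present in your identity), and what you recover is the unconditional relation (4.14), i.e.\ a tautology, not the stated criterion.

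The missing idea, and the paper's actual key step, is to apply $\breve{P}$ a \emph{second} time to the expanded identity and use $\breve{P}^{2}=\breve{P}+I$. This is what makes the bare tangent vector $\nabla_{W}\xi$ (not $\breve{P}\nabla_{W}\xi$) reappear on the right-hand side, and simultaneously produces $\breve{P}D^{s}(W,\breve{P}\xi)$ and $\breve{P}h^{s}(W,\xi)$, whose $S(T\acute{N})$-parts are precisely $Q_{1}\breve{P}D^{s}(W,\xi)$ and $M_{1}\breve{P}h^{s}(W,\xi)$. Equating tangential parts then gives $\nabla_{W}\xi=T_{1}\breve{P}\nabla_{W}^{\ell}\breve{P}\xi+Q_{1}\breve{P}D^{s}(W,\xi)-M_{1}\breve{P}h^{s}(W,\xi)-K_{2}\breve{P}h^{\ell}(W,\xi)$, where the first and last terms lie in $RadT\acute{N}$ and the middle two in $S(T\acute{N})$, from which the equivalence is immediate. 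If you want to avoid the second application of $\breve{P}$, the correct alternative is to project your identity onto $\breve{P}S(T\acute{N})\subset S(T\acute{N}^{\perp})$ rather than onto $S(T\acute{N})$: there $\breve{P}\nabla_{W}\xi$ does have a component (namely $K\nabla_{W}\xi$, whose vanishing is equivalent to $\nabla_{W}\xi\in\Gamma(RadT\acute{N})$), and the resulting condition is equivalent to the stated one via the isomorphism $\breve{P}S(T\acute{N})\cong S(T\acute{N})$. As written, though, your plan stalls at a vacuous projection.
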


\begin{proof}
Since 
\begin{align*}
\breve{\nabla}_{W}\breve{P}\xi -\breve{P}\breve{\nabla}_{W}\xi & =0, \\
\breve{\nabla}_{W}\breve{P}\xi & =\breve{P}\breve{\nabla}_{W}\xi ,
\end{align*}%
we obtain%
\begin{equation*}
-A_{\breve{P}\xi }W+\nabla _{W}^{l}\breve{P}\xi +D^{s}(W,\breve{P}\xi )=%
\breve{P}(\nabla _{W}\xi +h^{l}(W,\xi )+h^{s}(W,\xi )).
\end{equation*}%
If $\breve{P}$ is applied above equation 
\begin{equation*}
-\breve{P}A_{\breve{P}\xi }W+\breve{P}\nabla _{W}^{l}\breve{P}\xi +\breve{P}%
D^{s}(W,\breve{P}\xi )=\breve{P}^{2}\nabla _{W}\xi +\breve{P}^{2}h^{l}(W,\xi
)+\breve{P}^{2}h^{s}(W,\xi )
\end{equation*}%
is obtained. If 
\begin{equation*}
\breve{P}^{2}=\breve{P}+I
\end{equation*}%
and 
\begin{equation*}
\breve{P}W=KW+LW\text{, \ for }W\in \Gamma (T\acute{N}),
\end{equation*}%
\begin{equation*}
\breve{P}V=BV+CV\text{, for }V\in \Gamma (S(T\acute{N})),
\end{equation*}%
is taken into account 
\begin{equation*}
\left( 
\begin{array}{c}
-KA_{\breve{P}\xi }W-LA_{\breve{P}\xi }W \\ 
+\breve{P}\nabla _{W}^{l}\breve{P}\xi +\breve{P}D^{s}(W,\breve{P}\xi )%
\end{array}%
\right) =\binom{\breve{P}\breve{\nabla}_{W}\xi +\nabla _{W}\xi +\breve{P}%
h^{l}(W,\xi )+h^{l}(W,\xi )}{\breve{P}h^{s}(W,\xi )+h^{s}(W,\xi )}
\end{equation*}%
is obtained. If the tangent parts are equalized 
\begin{equation*}
\breve{P}\nabla _{W}^{l}\breve{P}\xi =T_{1}\breve{P}\nabla _{W}^{l}\breve{P}%
\xi +T_{2}\nabla _{W}^{l}\breve{P}\xi ,
\end{equation*}%
where $T_{1}$ and $T_{2}$ are projection morphisms of $\breve{P}\nabla
_{W}^{l}\breve{P}\xi $ on $RadT\acute{N}$ and $ltrT\acute{N}$ respectively.

If $Q_{1}$ and $Q_{2}$ are projection morphisms of $\breve{P}D^{s}(W,\breve{P%
}\xi )$ on $S(T\acute{N})$ and $S(T\acute{N})$ respectively. From 
\begin{equation*}
\breve{P}D^{s}(W,\breve{P}\xi )=Q_{1}\breve{P}D^{s}(W,\xi )+Q_{2}\breve{P}%
D^{s}(W,\xi ),
\end{equation*}%
we have 
\begin{equation*}
T_{1}\breve{P}\nabla _{W}^{l}\breve{P}\xi +Q_{1}\breve{P}D^{s}(W,\xi
)=\nabla _{W}\xi +M_{1}\breve{P}h^{s}(W,\xi )+K_{2}\breve{P}h^{l}(W,\xi ).
\end{equation*}%
From here 
\begin{equation*}
\nabla _{W}\xi =T_{1}\breve{P}\nabla _{W}^{l}\breve{P}\xi +Q_{1}\breve{P}%
D^{s}(W,\xi )-M_{1}\breve{P}h^{s}(W,\xi )-K_{2}\breve{P}h^{l}(W,\xi )
\end{equation*}%
is obtained. From this equation for $\nabla _{W}\xi \in \Gamma (RadT\acute{N}%
)$ necessary and sufficient condition is 
\begin{equation*}
Q_{1}\breve{P}D^{s}(W,\xi )-M_{1}\breve{P}h^{s}(W,\xi )=0,
\end{equation*}%
namely 
\begin{equation*}
Q_{1}\breve{P}D^{s}(W,\xi )=M_{1}\breve{P}h^{s}(W,\xi ).
\end{equation*}
\end{proof}


\begin{thebibliography}{10}
\bibitem[1]{GH} M. Crasmareanu, and C.E Hretcanu, \textit{\ Golden
differansiyel geometry}. Chaos, Solitons \& Fractals volume \textbf{38},
issue 5, (2008), 1229-1238

\bibitem[2]{GH2} M. Crasmareanu, and C.E. Hretcanu, \textit{Applications of
the Golden Ratio on Riemannian Manifolds. }Turkish J. Math. \textbf{33}, no.
2, (2009), 179-191.

\bibitem[3]{GH3} M. Crasmareanu, and C.E. Hretcanu, \textit{M. On some
invariant submanifolds in a Riemannian manifold with golden structure}.
An.Stiins. Univ. Al. I. Cuza Iasi. Mat. (N.S.)\textbf{53}, suppl. 1, (2007)
199-211.

\bibitem[4]{GCS} A. Gezer, N. Cengiz, and A. Salimov, \textit{On
integrability of Golden Riemannian structures.} Turkish J. Math.\textbf{\ 37}%
, (2013), 693-703.

\bibitem[5]{BM} B. Sahin, and M. A. Akyol, \textit{Golden maps between
Golden Riemannian manifolds and constancy of certain maps,} Math. Commun. 
\textbf{19}, (2014), 333-342.

\bibitem[6]{EC} F.E. Erdogan, and C. Y\i ld\i r\i m , \textit{Semi-invariant
submanifolds of Golden Riemannian manifolds}, AIP Conference Proceedings 
\textbf{1833}, 020044 (2017); doi: 10.1063/1.4981692

\bibitem[7]{M} M. \"{O}zkan, \textit{Prolonggations of golden structures to
tangent bundles, }Differ. Geom. Dyn. Syst. \textbf{16}, (2014), 227-238

\bibitem[8]{DB} K.L. Duggal, and A. Bejancu, \textit{Lightlike Submanifolds
of Semi-Riemannian Manifolds and Its Applications,} Kluwer, Dortrecht,
(1996). {\small \ }

\bibitem[9]{DS} D.L.Krishan and B. \c{S}ahin, \textit{Diferential Geometry
of Lightlike Submanifolds,} Springer Birkhause, (2010)

\ \ \ \ \ \ \ \ \ \ \ \ \ \ 

\bibitem[10]{N} K.Nomizu, and T.Sasaki, \textit{Affine Differential Geometry}%
, Cambridge Univ. Press. Cambridge, (1994).

\bibitem[11]{FC} F.E. Erdo\u{g}an and C. Y\i ld\i r\i m, \textit{Lightlike
Submanifolds with Planar Normal Section in Semi-Riemannian Product
Manifolds, }Intenational Electronic Journal of Geometry\textit{, }Vol\textbf{%
9}, No 1,(2016),70-77.

\bibitem[12]{BC} C. Y\i ld\i r\i m and B. \c{S}ahin, \textit{Screen
Transversal Lightlike submanifolds of indefinite Sasakian Manifolds, }An.%
\c{S}t. Univ. Ovidius Constanta, Vol.\textbf{18}(2), (2010), 315-336.

\bibitem[13]{FBR} F. E. Erdo\u{g}an, R. Gunes and B. \c{S}ahin, \textit{%
Half-lightlike submanifold with planar normal sections in R42,} Turk. J.
Math. Vol.\textbf{38}, (2014), 764-777.

\bibitem[14]{NE} N.\"{O}. Poyraz, and E. Ya\c{s}ar, \textit{Lightlike
Hypersurfaces of A golden semi-Riemannian Manifold,} Mediterr. J. Math.(2017)%
\textbf{14}:204 .

\bibitem[15]{S} X. Senlin, and N.Yilong, Submanifolds of Product Riemannian
Manifold, Acta Mathematica Scienta, \textbf{20}(B),(2000), 213-218.

\bibitem[16]{SBE} S.Y. Perkta\c{s}, E. K\i l\i\c{c}, and B.E. Acet, \textit{%
Lightlike Hypersurfaces of Para-Sasakian Space Form}, Gulf J. Math. 2(2),
(2014),333-342.
\end{thebibliography}
\end{document}